\theoremstyle{plain}
  \newtheorem{thm}{Theorem}
  \newtheorem{prop}[thm]{Proposition}
  \newtheorem{lem}[thm]{Lemma}
  \newtheorem{cor}[thm]{Corollary}
  \newtheorem{defi}{Definition}
  \newtheorem{thqt}{Theorem}
\theoremstyle{definition}
  \newtheorem{rem}[thm]{Remark}
\numberwithin{equation}{section}
\author[S. Wada]{Shuhei Wada}
\address{Department of Information and Computer Engineering,\\ 
Kisarazu National College of Technology,\\
2-11-1 Kiyomidai-Higashi, Kisarazu,\\ 
Chiba 292-0041, Japan
} 
\email{wada@j.kisarazu.ac.jp}
\author[T. Yamazaki]{Takeaki Yamazaki}
\address{%
  Department of Electrical, Electronic and Computer Engineering\\
  Toyo University\\
  Kawagoe 350-8585, Japan%
} \email{t-yamazaki@toyo.jp}
\keywords{Positive definite operators; operator mean; 
ALM mean; BMP mean; log-Euclidean mean; Karcher mean; 
power mean; logarithmic mean.}
\subjclass[2010]{Primary 47A64. Secondary 47A30, 47A63.}
\title{Equivalence relations among \\
some inequalities on
operator means}
\begin{document}
\maketitle

\begin{abstract}
We will consider about some inequalities on operator means for
more than three operators, for instance, 
ALM and BMP geometric means  will be considered.
Moreover,  
log-Euclidean and logarithmic means
for several operators will be treated.
\end{abstract}

\section{Introduction}
Let $\mathcal{H}$ be a complex Hilbert space, and
$B(\mathcal{H})$ be the algebra of all 
bounded linear operators on $\mathcal{H}$.
An operator $A$ is said to be positive semi-definite
(resp. positive definite)
if and only if $\langle Ax,x\rangle \geq 0$ for 
all $x\in \mathcal{H}$ 
(resp. $\langle Ax,x\rangle > 0$ for 
all non-zero $x\in \mathcal{H}$). We denote positive semi-definite operator 
$A\in B(\mathcal{H})$ by $A\geq 0$.
Let $B(\mathcal{H})_{+}$ and $B(\mathcal{H})_{sa}$ be 
the sets of all positive definite and self-adjoint operators,
respectively.
We can consider the order among $B(\mathcal{H})_{sa}$, i.e.,
for $A, B\in B(\mathcal{H})_{sa}$,
$$ A\leq B \quad \text{if and only if} \quad 0\leq B-A. $$
A real valued function $f$ on an interval $J\subset \mathbb{R}$ 
is called an {\it operator monotone function} if and only if 
$$ A\leq B \quad \mbox{implies}\quad
f(A)\leq f(B) $$
for all $A, B\in B(\mathcal{H})_{sa}$ whose 
spectral are contained in $J$.

For two positive definite operators, the operator 
mean is important in the operator theory.

\begin{defi}[Operator mean {\cite{KA1980}}]
A binary operation $\sigma: B(\mathcal{H})_{+}^{2} \to B(\mathcal{H})_{+}$
is called an {\it operator mean} if and only if the 
following conditions are satisfied.
\begin{enumerate}
\item If $A\leq C$ and $B\leq D$, then $A\sigma B\leq C\sigma D$,
\item $X^{*} (A\sigma B)X\leq (X^{*}AX)\sigma (X^{*}BX)$ for 
$X\in B(\mathcal{H})$,
\item $A_{n}\sigma B_{n}\downarrow A\sigma B$ when 
$A_{n}\downarrow A$ and $B_{n}\downarrow B$ in the strong 
operator topology,
\item $I\sigma I=I$, where $I$ means the identity operator on $\mathcal{H}$.
\end{enumerate}
\end{defi}

We notice that operator means can be defined for 
positive semi-definite operators by (3) in Definition 1. 
Kubo-Ando \cite{KA1980} have shown the following important result:

\begin{thqt}[\cite{KA1980}]\label{thm:Kubo-Ando}
For each operator mean $\sigma$, there exists the 
unique operator monotone function $f: (0,\infty) \longrightarrow (0,\infty)$
such that $f(1)=1$ and 
$$ f(t)I=I\sigma (tI) \quad \text{for all $t\in (0,\infty)$.} $$
Moreover for $A\in B(\mathcal{H})_{+}$ and $B\geq 0$, the formula
$$ A\sigma B=A^{\frac{1}{2}}f(A^{\frac{-1}{2}}BA^{\frac{-1}{2}})A^{\frac{1}{2}}$$
holds, where the right hand side is defined via the analytic functional 
calculus. An operator monotone function $f$ is called the
representing function of $\sigma$.
\end{thqt}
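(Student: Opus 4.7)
The plan is to construct $f$ from the scalar relation, then bootstrap to the operator formula via the transformer (in)equality, and finally read off operator monotonicity from axiom (1).

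First, I would define $f$ through $f(t)I := I\sigma(tI)$ for $t > 0$. To see this is well-defined, apply axiom (2) with $X = U$ unitary; iterating with $U^{*}$ in place of $U$ promotes the inequality to the equality $U^{*}(I\sigma(tI))U = I\sigma(tI)$. Hence $I\sigma(tI)$ commutes with every unitary and must be a scalar multiple of $I$, which we name $f(t)$. The normalization $f(1)=1$ is axiom (4), while axiom (1) delivers scalar monotonicity of $f$; strict positivity on $(0,\infty)$ drops out once the general formula is available (one uses the transformer equality to pull positive scalars through $\sigma$).

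Next I would establish the formula $A\sigma B = A^{1/2} f(A^{-1/2} B A^{-1/2}) A^{1/2}$. The same bi-directional use of axiom (2) applied to an invertible $X$ and then to its inverse yields the transformer \emph{equality} $X^{*}(A\sigma B)X = (X^{*}AX)\sigma(X^{*}BX)$. Applying this with $X = A^{1/2}$ to the pair $(I, A^{-1/2}BA^{-1/2})$ reduces the task to identifying $I \sigma T$ with $f(T)$ for positive $T$. The unitary transformer equality shows $I\sigma T$ commutes with every unitary in the commutant of $T$; since that commutant is a von Neumann algebra (spanned by its unitaries), $I\sigma T$ lies in the bicommutant of $T$ and so equals $g(T)$ for some bounded Borel function $g$. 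Specializing to $T = tI$ identifies $g$ with $f$ on the spectrum of $T$. Operator monotonicity of $f$ is then immediate from axiom (1), since $0 \leq S \leq T$ gives $f(S) = I\sigma S \leq I\sigma T = f(T)$. Uniqueness of $f$ is forced by the defining relation on scalars, and the extension of the formula to positive semi-definite $B$ comes from the strong-operator continuity in axiom (3) applied to $B + \varepsilon I \downarrow B$.

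The hard step is the identification $I\sigma T = f(T)$: it rests both on promoting axiom (2) from an inequality to an equality on the invertible group and on the bicommutant argument that pulls $I\sigma T$ into the abelian von Neumann algebra generated by $T$. Without this bridge, one only has disjoint information about $f$ on scalars and about $\sigma$ on operators; with it, every remaining claim in the theorem is a short consequence of the Kubo–Ando axioms.
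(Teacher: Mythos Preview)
The paper does not prove this statement: Theorem~\ref{thm:Kubo-Ando} is quoted from \cite{KA1980} as background and is stated without proof, so there is no argument in the paper to compare your proposal against.

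On the substance of your sketch, the overall architecture is the standard Kubo--Ando route and most steps are fine, but the identification step contains a genuine gap. From the bicommutant argument you obtain, for a \emph{fixed} positive $T$, that $I\sigma T=g(T)$ for some bounded Borel function $g$ on $\sigma(T)$; separately, from $I\sigma(tI)=f(t)I$ you know $f$ on scalars. Writing ``specializing to $T=tI$ identifies $g$ with $f$ on the spectrum of $T$'' conflates two different situations: setting $T=tI$ tells you nothing about the function $g$ attached to a non-scalar $T$, because that $g$ was produced by the bicommutant of \emph{that particular} $T$ and a priori depends on $T$. You need an extra device to pin $g$ down pointwise---for instance, pass to the direct sum $\widetilde{T}=T\oplus tI$ on $\mathcal{H}\oplus\mathcal{H}$, apply the transformer equality with the coordinate projections (which commute with $\widetilde{T}$) to see that $I\sigma\widetilde{T}$ restricts to $I\sigma T$ on the first summand and to $f(t)I$ on the second, and then invoke the bicommutant argument for $\widetilde{T}$ to conclude $g(t)=f(t)$ for each $t\in\sigma(T)$. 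Alternatively, one can argue via spectral projections and axiom~(2) on finite-spectrum operators together with axiom~(3). Without some such bridge, the passage from ``$I\sigma T\in W^{*}(T)$'' to ``$I\sigma T=f(T)$'' is not justified.
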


Typical examples of operator means are weighted  
harmonic, geometric and arithmetic means  
denoted by
$!_{w}$, $\sharp_{w}$ and $\nabla_{w}$ for $w\in [0,1]$, respectively.
Their representing functions are 
$[ (1-w)+wt^{-1}]^{-1}$, $t^{w}$ and 
$1-w+wt$, respectively. In fact, we can define 
$A!_{w} B=[(1-w)A^{-1}+wB^{-1}]^{-1}$, 
$A\sharp_{w}B=A^{\frac{1}{2}}(A^{\frac{-1}{2}}BA^{\frac{-1}{2}})^{w}A^{\frac{1}{2}}$ and
$A\nabla_{w}B=(1-w)A+wB$.

Extending Kubo-Ando theory to the theory for three or more operators 
was a long standing problem, in particular, 
we did not have any nice definition of geometric mean for three operators.
Recently, Ando-Li-Mathias have given a 
nice definition of geometric mean for $n$-tuples of
positive definite matrices in \cite{ALM2004}. 
Then many authors 
study about operator means for $n$-tuples of 
positive definite operators, and now we have three 
definitions of geometric means which are called 
ALM, BMP and the Karcher means.
Moreover, we have an extension of the Karcher mean which is called  
the power mean. 

M. Uchiyama and one of the authors have 
obtained equivalence relations between inequalities for
the power and arithmetic means as extensions of
a converse of Loewner-Heinz inequality \cite{UY2014}.

In this paper, we shall investigate the previous research \cite{UY2014} to
other operator means for $n$-tuples of operators.
In fact, we shall treat ALM and BMP means, 
moreover we shall discuss about 
some types of logarithmic means 
of several operators.
This paper is organized as follows.
In Section 2, we will introduce some definitions and notations which will 
be used in this paper. 
Then we shall consider about weighted operator means
in the view point of their representing functions in Section 3.
In Section 4, we shall consider about generalizations
of the results by M. Uchiyama and one of the authors \cite{UY2014}.
Especially, we shall consider about the log-Euclidean mean which is a kind of geometric mean for 
$n$-tuples of positive definite operators. 
In the last section, we shall introduce some properties of the 
$M$-logarithmic mean which is generated from an arbitrary operator mean via an integration.

\section{Primarily}
Let $OM$ be the set of all operator monotone functions
on $(0,\infty)$,
and let $OM_{1}=\{ f\in OM :\ f(1)=1\}$.
For $f\in OM_{1}$, there exists an operator 
mean $\sigma_{f}$ such that
$$ A\sigma_{f}B=
A^{\frac{1}{2}} f(A^{\frac{-1}{2}}BA^{\frac{-1}{2}}) 
A^{\frac{1}{2}} $$
for $A, B\in B(\mathcal{H})_{+}$.
It is well known that for $w\in [0,1]$, if 
$$ A !_{w} B\leq A\sigma_{f} B\leq A\nabla_{w} B $$
holds for all $A, B\in B(\mathcal{H})_{+}$, then 
$$ \left[(1-w)+wt^{-1}\right]^{-1} 
\leq f(t) \leq (1-w)+wt $$
holds for all $t>0$.

Let $A,B\in B(\mathcal{H})_{+}$. The Thompson metric $d(A,B)$ is 
defined by 
$$ d(A,B)=\max\{ \log M(A/B), \log M(B/A)\}, $$
where $M(A/B)=\inf \{\alpha>0\ |\ B\leq \alpha A\}$. 
It is known that a cone of positive definite operators is 
a complete metric space for the Thompson metric.
In what follows, we will consider about ``limit'' of operator sequences 
or ``continuous'' of operator valued functions in the 
Thompson metric without any explanation.

For $n$-tuples of positive definite operators, the ALM and 
BMP (geometric) means are defined as follows.

\begin{thqt}[ALM mean \cite{ALM2004}]\label{thqt1}
For $\mathbb{A}=(A_{1}, A_{2})\in B(\mathcal{H})_{+}^{2}$, 
the ALM (geometric) mean $\mathfrak{G}_{ALM}(\mathbb{A})$
of $\mathbb{A}$ is defined by 
$\mathfrak{G}_{ALM}(\mathbb{A})=A_{1}\sharp_{1/2} A_{2}$.
 Assume that the ALM (geometric) mean
$\mathfrak{G}_{ALM}(\cdot)$ on $B(\mathcal{H})_{+}^{n-1}$ is defined. Let $\mathbb{A}=(A_{1}, \dots, A_{n})\in B(\mathcal{H})_{+}^{n}$ 
and $\{A_i^{(r)}\}_{r=0}^{\infty}$ $(i=1,...,n)$
be the sequences of  positive definite operators defined by
\begin{align*}
A_i^{(0)}=A_i\quad\mbox{and}\quad
A_i^{(r+1)}=\mathfrak{G}_{ALM}\left((A_j^{(r)})_{j\neq i}\right),
\end{align*}
where $(A_j^{(r)})_{j\neq i}=(A_{1}^{(r)},...,A_{i-1}^{(r)},A_{i+1}^{(r)},...,A_{n}^{(r)})$.
Then there exists $\lim_{r\rightarrow\infty}A_i^{(r)}$ 
$(i=1,...,n)$ 
and it does not depend on $i$. 
The ALM (geometric) mean $\mathfrak{G}_{ALM}(\mathbb{A})$ for $n$-tuples of 
positive definite operators $\mathbb{A}\in 
B(\mathcal{H})_{+}^{n}$
is defined by $\lim_{r\rightarrow\infty}A_i^{(r)}$.
\end{thqt}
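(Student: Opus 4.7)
The plan is to prove both convergence and independence of the index simultaneously, by induction on $n$. The base case $n=2$ is trivial since $\mathfrak{G}_{ALM}(A_1,A_2) = A_1 \sharp_{1/2} A_2$ is defined directly. For the inductive step, assume $\mathfrak{G}_{ALM}$ is well-defined on $B(\mathcal{H})_+^{n-1}$. The natural framework is the Thompson metric $d$ introduced above, under which the cone of positive definite operators is complete; hence it suffices to show that each sequence $\{A_i^{(r)}\}_{r\geq 0}$ is Cauchy and that their limits coincide.

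The central object to control is the diameter
\[
\delta^{(r)} = \max_{1\leq i,j\leq n} d(A_i^{(r)},A_j^{(r)}).
\]
The key step will be to establish a strict contraction estimate of the form $\delta^{(r+1)} \leq c_n\, \delta^{(r)}$ with $c_n < 1$. To do this cleanly, I would strengthen the induction hypothesis to include a Lipschitz-type property: namely, that $\mathfrak{G}_{ALM}$ on $B(\mathcal{H})_+^{n-1}$ is jointly nonexpansive in the Thompson metric, with the two-variable base case $d(A\sharp_{1/2}B, A'\sharp_{1/2}B') \leq \tfrac12\bigl(d(A,A')+d(B,B')\bigr)$, which follows from the geodesic character of $\sharp_{1/2}$. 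Since $A_i^{(r+1)}$ and $A_j^{(r+1)}$ are ALM means of two $(n-1)$-tuples that agree in $n-2$ coordinates and differ in the remaining pair (one contains $A_j^{(r)}$, the other $A_i^{(r)}$), the inductive Lipschitz bound applied to those two differing slots yields $\delta^{(r+1)} \leq \tfrac{n-2}{n-1}\delta^{(r)}$, giving a contraction constant $c_n = \tfrac{n-2}{n-1} < 1$ for $n\geq 3$.

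Once the contraction is in hand, iterating gives $\delta^{(r)} \leq c_n^{\,r}\, \delta^{(0)}$. To see each individual sequence is Cauchy, observe that $A_i^{(r+1)}$ is the ALM mean of $n-1$ operators all within Thompson distance $\delta^{(r)}$ of $A_i^{(r)}$; combining this with the inductive nonexpansiveness (and the normalization $\mathfrak{G}_{ALM}(X,\dots,X)=X$) one obtains $d(A_i^{(r+1)}, A_i^{(r)}) \leq \delta^{(r)}$. Summing the geometric series shows $\{A_i^{(r)}\}_r$ is Cauchy in the complete metric space, hence convergent to some limit $G_i$. Since $d(G_i,G_j) = \lim_{r\to\infty} d(A_i^{(r)},A_j^{(r)}) \leq \lim_{r\to\infty} \delta^{(r)} = 0$, all the $G_i$ coincide, which is the desired conclusion.

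The main obstacle is the strengthened induction: the mere well-definedness of $\mathfrak{G}_{ALM}$ on $(n-1)$-tuples is not enough, one must also propagate a quantitative Lipschitz/nonexpansivity estimate in the Thompson metric through the induction. Verifying that this estimate is preserved under the defining limit—i.e., that the Lipschitz bound for each approximation $A_i^{(r)}$ passes to $\mathfrak{G}_{ALM}$ in the limit—requires justifying interchange of limits and careful bookkeeping of the coordinate that was omitted. A secondary technical point is confirming that the cone equipped with the Thompson metric is complete in sufficient generality (bounded invertible positive operators) so that Cauchy sequences do converge to positive definite, rather than merely positive semi-definite, limits; this follows from standard facts about the Thompson metric but should be invoked explicitly.
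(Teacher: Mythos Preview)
The paper does not contain a proof of this statement: Theorem~\ref{thqt1} is quoted from \cite{ALM2004} as background (note that it is typeset as a \texttt{thqt} environment, which in this paper is reserved for cited results lettered A, B, C, \dots), so there is no in-paper argument to compare against.

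That said, your outline is essentially the standard argument and is sound in strategy. One bookkeeping correction: after invoking permutation invariance (property (P3)) to align the two $(n-1)$-tuples $(A_k^{(r)})_{k\neq i}$ and $(A_k^{(r)})_{k\neq j}$, they differ in exactly \emph{one} coordinate (one contains $A_j^{(r)}$ where the other contains $A_i^{(r)}$), not a ``pair'' of slots. With the strengthened inductive hypothesis $d\bigl(\mathfrak{G}_{ALM}(\mathbb{A}),\mathfrak{G}_{ALM}(\mathbb{B})\bigr)\leq \frac{1}{n-1}\sum_{k}d(A_k,B_k)$ on $(n-1)$-tuples, this yields the sharper contraction $\delta^{(r+1)}\leq \frac{1}{n-1}\,\delta^{(r)}$ rather than $\frac{n-2}{n-1}\,\delta^{(r)}$; either constant is $<1$ for $n\geq 3$, so the conclusion is unaffected. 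The ``main obstacle'' you flag---propagating the weighted Lipschitz estimate through the limit---is routine: once each $A_i^{(r)}$ depends $\frac{1}{n-1}$-Lipschitz on each input (which follows from the inductive hypothesis and the recursion), the inequality passes to the limit by continuity of the metric.
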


A vector $\omega=(w_{1},...,w_{n})\in (0,1)^{n}$ is said to be a {\it probability vector} if 
and only if $\sum_{k}w_{k}=1$. Let $\Delta_{n}$ be the set of all 
probability vectors in $(0,1)^{n}$.

\begin{thqt}[BMP mean \cite{BMP2010, IN2009, LLY2011}]\label{thqt2}
For $\mathbb{A}=(A_{1}, A_{2})\in B(\mathcal{H})_{+}^{2}$ and 
$\omega=(1-w, w) \in \Delta_{2}$, 
the BMP (geometric) mean $\mathfrak{G}_{BMP}(\omega; \mathbb{A})$
of $\mathbb{A}$ is defined by 
$\mathfrak{G}_{BMP}(\omega; \mathbb{A})=A_{1}\sharp_{w} A_{2}$.
 Assume that the BMP (geometric) mean
$\mathfrak{G}_{BMP}(\cdot; \cdot)$ on 
$\Delta_{n-1}\times B(\mathcal{H})_{+}^{n-1}$ 
is defined. Let $\mathbb{A}=(A_{1}, \dots, A_{n})\in B(\mathcal{H})_{+}^{n}$ 
and $\omega=(w_{1},...,w_{n})\in \Delta_{n}$.
Define the sequences of  positive definite operators 
$\{A_i^{(r)}\}_{r=0}^{\infty}$ $(i=1,...,n)$ by
\begin{align*}
A_i^{(0)}=A_i\quad\mbox{and}\quad
A_i^{(r+1)}=\mathfrak{G}_{BMP}
\left(\hat{\omega}_{\neq i}; (A_j^{(r)})_{j\neq i}\right)\sharp_{w_{i}}A_{i}^{(r)},
\end{align*}
where $\hat{\omega}_{\neq i}=\frac{1}{\sum_{j\neq i}w_{j}}(w_{j})_{j\neq i}$.
Then there exists $\lim_{r\rightarrow\infty}A_i^{(r)}$ 
$(i=1,...,n)$ and it does not depend on $i$. 
The BMP (geometric) mean $\mathfrak{G}_{BMP}(\omega; \mathbb{A})$
for $n$-tuples of positive definite operators 
$\mathbb{A}\in B(\mathcal{H})_{+}^{n}$
is defined by $\lim_{r\rightarrow\infty}A_i^{(r)}$.
\end{thqt}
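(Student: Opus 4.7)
The plan is to proceed by induction on $n$, using completeness of the positive cone under the Thompson metric $d$. The base $n=2$ requires no iteration, so I would assume that $\mathfrak{G}_{BMP}$ is defined on $(n-1)$-tuples and carry along, as part of the inductive hypothesis, a Lipschitz bound of the shape
\[ d\bigl(\mathfrak{G}_{BMP}(\omega;\mathbb{A}),\mathfrak{G}_{BMP}(\omega;\mathbb{B})\bigr)\leq \sum_{k}w_{k}\,d(A_{k},B_{k}), \]
the $n=2$ case being the classical fact $d(A\sharp_w B,C\sharp_w D)\leq (1-w)d(A,C)+w\,d(B,D)$. Completeness of the cone in $d$ will then convert convergence of the diameter to zero into existence of a common limit point.

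The analytic ingredients I would exploit throughout are two contraction properties of the two-variable weighted geometric mean, valid for $w\in(0,1)$:
\[ d(A\sharp_w B,\,A)=w\,d(A,B),\qquad d(A\sharp_w B,\,C\sharp_w D)\leq (1-w)d(A,C)+w\,d(B,D). \]
These are standard consequences of the Kubo--Ando axioms together with homogeneity, and they make $\sharp_w$ a strict contraction in its second argument. They are also what makes the inductive Lipschitz statement above self-sustaining.

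Set $G_{\neq i}^{(r)}=\mathfrak{G}_{BMP}(\hat{\omega}_{\neq i};(A_k^{(r)})_{k\neq i})$ and define the diameter
\[ \delta_r=\max_{1\leq i,j\leq n} d\bigl(A_i^{(r)},A_j^{(r)}\bigr). \]
I would show $\delta_{r+1}\leq c\,\delta_r$ for some $c<1$ depending only on $\min_k w_k$. By the inductive Lipschitz bound, $d(G_{\neq i}^{(r)},A_k^{(r)})\leq \delta_r$ for each $k\neq i$, and a similar use of it controls $d(G_{\neq i}^{(r)},G_{\neq j}^{(r)})$ via $\delta_r$. Combining with the two displayed inequalities through the triangle inequality
\[ d(A_i^{(r+1)},A_j^{(r+1)})\leq d(G_{\neq i}^{(r)}\sharp_{w_i}A_i^{(r)},G_{\neq i}^{(r)})+d(G_{\neq i}^{(r)},G_{\neq j}^{(r)})+d(G_{\neq j}^{(r)},G_{\neq j}^{(r)}\sharp_{w_j}A_j^{(r)}) \]
yields the desired contraction. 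Geometric decay $\delta_r\to 0$, together with $d(A_i^{(r+1)},A_i^{(r)})=w_i\,d(G_{\neq i}^{(r)},A_i^{(r)})\leq w_i\,\delta_r$, forces each $\{A_i^{(r)}\}_r$ to be Cauchy, and the vanishing diameter forces all $n$ limits to coincide. A concluding step verifies the Lipschitz bound at level $n$, so the induction persists.

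The main obstacle is the middle term $d(G_{\neq i}^{(r)},G_{\neq j}^{(r)})$: the two inductively defined means involve \emph{different} $(n-1)$-subfamilies indexed by the complements of $i$ and $j$, and \emph{different} renormalized weight vectors $\hat{\omega}_{\neq i},\hat{\omega}_{\neq j}$. Bounding this uniformly by a multiple of $\delta_r$ strictly less than $1$ is where strict positivity $w_i>0$ genuinely enters and is the only quantitatively delicate point in the argument; once this estimate is in hand, the rest is a standard Banach-type completeness-and-contraction conclusion.
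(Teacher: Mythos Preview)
The paper does not prove this statement; Theorem~\ref{thqt2} is quoted from \cite{BMP2010, IN2009, LLY2011} as background in Section~2, with no argument supplied. So there is no in-paper proof to compare your attempt against, and your sketch should be measured directly against those references.

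On its own merits, your framework---induction on $n$, completeness of the cone in the Thompson metric, and carrying the weighted Lipschitz estimate (this is exactly property~(P5)) through the induction---is indeed the one used in the cited sources. The gap is precisely where you locate it, but it is more serious than your wording suggests: the triangle-inequality decomposition you write down cannot be closed to a strict contraction. With your own bounds the outer terms already contribute $w_i\delta_r+w_j\delta_r$, so you would need
\[
d\bigl(G_{\neq i}^{(r)},G_{\neq j}^{(r)}\bigr)<(1-w_i-w_j)\,\delta_r .
\]
The inductive Lipschitz bound alone cannot deliver this, because $G_{\neq i}^{(r)}$ and $G_{\neq j}^{(r)}$ are $(n-1)$-means taken with \emph{different} weight vectors $\hat\omega_{\neq i}$ and $\hat\omega_{\neq j}$, whereas (P5) only compares means with the \emph{same} weight vector. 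Even in the commuting (scalar) case with $n=3$ and equal weights, where one computes directly $d(G_{\neq 1},G_{\neq 2})=\tfrac12\,d(A_1,A_2)\le\tfrac12\delta_r$, your three terms sum to $\tfrac13\delta_r+\tfrac12\delta_r+\tfrac13\delta_r=\tfrac{7}{6}\delta_r>\delta_r$; yet in that commuting case the iteration actually terminates after a single step. The cancellation that drives convergence is invisible to your decomposition, so the route you propose for the ``quantitatively delicate point'' does not go through. The cited references obtain the diameter contraction by a finer estimate that also carries monotonicity (P4) through the induction and exploits that Thompson balls are Loewner order intervals; some continuity of the $(n-1)$-mean in the weight vector is also used. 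You would need to incorporate those ingredients rather than rely on the three-term triangle bound.

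A minor slip: since $A_i^{(r+1)}=G_{\neq i}^{(r)}\sharp_{w_i}A_i^{(r)}$ and $d(C\sharp_w D,\,D)=(1-w)\,d(C,D)$, one has $d(A_i^{(r+1)},A_i^{(r)})=(1-w_i)\,d(G_{\neq i}^{(r)},A_i^{(r)})$, not $w_i$ times it.
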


We remark that it is not known any weighted ALM mean.
Let $\mathbb{A}=(A_{1},...,A_{n}), \mathbb{B}=(B_{1},...,B_{n})
\in B(\mathcal{H})_{+}^{n}$ and $\omega=(w_{1},...,w_{n})\in \Delta_{n}$.
Here  we denote
the above geometric means 
of $\mathbb{A}$ for the 
weight $\omega$ by $\frak{G}(\omega; \mathbb{A})$,
and they have at least 10 basic properties 
\cite{ALM2004, BMP2010, IN2009, LLY2011}
as follows
(in the ALM mean case, we consider just only $\omega=(\frac{1}{n},...,\frac{1}{n})$ case).

\begin{itemize}
\item[(P1)] If $A_{1},...,A_{n}$ commute with each other, 
then 
$$ \frak{G}(\omega; \mathbb{A})=
\prod_{k=1}^{n}A_{k}^{w_{k}}.$$
\item[(P2)] For positive numbers $a_{1},...,a_{n}$,
$$\frak{G}(\omega;a_{1}A_{1},...,a_{n}A_{n})
=\frak{G}(\omega;a_{1},...,a_{n})
\frak{G}(\omega;\mathbb{A})=
\left(\prod_{k=1}^{n} a_{k}^{w_{k}}\right)\frak{G}(\omega;\mathbb{A}).$$
\item[(P3)] For any permutation $\sigma$
on $\{1,2,...,n\}$,
$$\frak{G}(w_{\sigma(1)},...,w_{\sigma(n)};
A_{\sigma(1)},...,A_{\sigma(n)})=
\frak{G}(\omega; \mathbb{A}).$$
\item[(P4)] If $A_{i}\leq B_{i}$ for $i=1,...,n$, then
$ \frak{G}(\omega; \mathbb{A})
\leq \frak{G}(\omega; \mathbb{B}).$
\item[(P5)]  $\frak{G}(\omega;\cdot)$ is continuous
on each operators. Especially,
$$ d(\frak{G}(\omega;\mathbb{A}), \frak{G}(\omega;\mathbb{B}))
\leq \sum_{i=1}^{n}w_{i}d(A_{i}, B_{i}). $$
\item[(P6)] For each $t\in [0,1]$,
$
 (1-t)\frak{G}(\omega; \mathbb{A})+
t\frak{G}(\omega; \mathbb{B}) 
\leq 
\frak{G}(\omega; (1-t)\mathbb{A}+t\mathbb{B}).
$
\item[(P7)] For any invertible $X\in B(\mathcal{H})$, 
$ \frak{G}(\omega; X^{*}A_{1}X,...,X^{*}A_{n}X)=
X^{*}\frak{G}(\omega; \mathbb{A})X. $
\item[(P8)] 
$\frak{G}(\omega; \mathbb{A}^{-1})^{-1}=
\frak{G}(\omega; \mathbb{A}), $
where $\mathbb{A}^{-1}=(A_{1}^{-1},..., A_{n}^{-1})$.
\item[(P9)] If every $A_{i}$ is a positive definite matrix, then
$\det \frak{G}(\omega; \mathbb{A})=
\prod_{i=1}^{n} \det A_{i}^{w_{i}}. $
\item[(P10)]
$$ \left[ \sum_{i=1}^{n}w_{i}A_{i}^{-1}\right]^{-1}
\leq \frak{G}(\omega; \mathbb{A})
\leq 
\sum_{i=1}^{n}w_{i}A_{i}. $$
\end{itemize}

\section{Operator means of two variables}

In this section, we shall consider the weighted operator means 
in the view point of their weight.

\begin{thm}\label{prop1}
Let $\Phi, f\in OM_{1}$ be non-constant, and let $\sigma$ be 
an operator mean whose representing 
 function is $\Phi$. If $\Phi'(1)=w\in (0,1)$, then  
for $A, B\in B(\mathcal{H})_{sa}$, they are mutually equivalent:
\begin{enumerate}
\item $(1-w)A\leq wB$,
\item $f(\lambda A+I)\sigma f(-\lambda B+I)\leq I$ 
holds for all sufficiently small $\lambda \geq  0$.
\end{enumerate}
\end{thm}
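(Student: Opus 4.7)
The plan is to exploit two sandwich inequalities at the level of representing functions, together with a short first-order expansion at $\lambda=0$. Since $\Phi$ is operator monotone on $(0,\infty)$ it is scalar concave, and $\Phi(1)=1$, $\Phi'(1)=w$ give the tangent-line bound $\Phi(t)\le (1-w)+wt$; dualizing via $t\mapsto t/\Phi(1/t)$ (another element of $OM_{1}$ with derivative $w$ at $1$) yields $\Phi(t)\ge t/((1-w)t+w)$. Hence $!_{w}\le\sigma\le\nabla_{w}$ as operator means. By the same reasoning $f(t)\le 1+f'(1)(t-1)$, and because $f\in OM_{1}$ is non-constant one has $f'(1)>0$ (every non-constant operator monotone function on $(0,\infty)$ has strictly positive derivative, via its Pick integral representation). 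These three ingredients, together with axiom (1) of the definition of operator mean, drive both directions.

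For (1) $\Rightarrow$ (2), restrict $\lambda\ge 0$ to be small enough that $\lambda A+I$, $-\lambda B+I$, $I+f'(1)\lambda A$, and $I-f'(1)\lambda B$ all lie in $B(\mathcal{H})_{+}$. Applying the scalar tangent inequality for $f$ by functional calculus gives $f(\lambda A+I)\le I+f'(1)\lambda A$ and $f(-\lambda B+I)\le I-f'(1)\lambda B$, so chaining monotonicity of $\sigma$ with $\sigma\le\nabla_{w}$ produces
\[
f(\lambda A+I)\,\sigma\,f(-\lambda B+I)\le (I+f'(1)\lambda A)\,\sigma\,(I-f'(1)\lambda B)\le(I+f'(1)\lambda A)\,\nabla_{w}\,(I-f'(1)\lambda B),
\]
and the last expression equals $I+f'(1)\lambda[(1-w)A-wB]$, which is $\le I$ by assumption (1).

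For (2) $\Rightarrow$ (1), set $h(\lambda)=f(\lambda A+I)\,\sigma\,f(-\lambda B+I)$, so that $h(0)=I\,\sigma\,I=I$. Assumption (2) says $\langle(h(\lambda)-I)x,x\rangle\le 0$ for every $x$ and all small $\lambda\ge 0$; dividing by $\lambda>0$ and letting $\lambda\downarrow 0$ yields $h'(0)\le 0$. The core technical step, and the main obstacle of the proof, is the derivative identity
\[
h'(0)=f'(1)\bigl[(1-w)A-wB\bigr].
\]
I would derive this from the Kubo--Ando formula $X\,\sigma\,Y=X^{1/2}\Phi(X^{-1/2}YX^{-1/2})X^{1/2}$ by writing $X(\lambda)=I+\lambda f'(1)A+O(\lambda^{2})$ and $Y(\lambda)=I-\lambda f'(1)B+O(\lambda^{2})$, checking that $X(\lambda)^{-1/2}Y(\lambda)X(\lambda)^{-1/2}=I-\lambda f'(1)(A+B)+O(\lambda^{2})$, and using that the Fréchet derivative of $\Phi$ at $I$ is scalar multiplication by $\Phi'(1)=w$; a clean expansion of the triple product then gives the claim. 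The only subtlety is the non-commutativity of $A$, $B$ and the three factors, but since everything sits in a small norm-neighbourhood of $I$ where $\Phi$ and $f$ are analytic, this is a routine operator norm expansion. Because $f'(1)>0$, the conclusion $h'(0)\le 0$ is equivalent to $(1-w)A\le wB$, completing the equivalence.
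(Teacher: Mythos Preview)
Your argument is correct. The forward direction $(1)\Rightarrow(2)$ is essentially the paper's, although you pass through the scalar tangent inequality $f(s)\le 1+f'(1)(s-1)$ and then use monotonicity of $\sigma$, whereas the paper applies operator concavity of $f$ directly to the convex combination $(1-w)(\lambda A+I)+w(-\lambda B+I)$; both routes land on the same $\nabla_{w}$ upper bound.

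The converse $(2)\Rightarrow(1)$ is where you genuinely diverge. The paper does \emph{not} differentiate: it uses the lower bound $\sigma\ge{!}_{w}$, then the operator concavity of $t^{\alpha}$ to replace $f(\lambda T+I)^{-1}$ by $f(\lambda T+I)^{-p/\lambda}$, and finally takes the iterated limits $\lambda\to0$ (via $f(\lambda T+I)^{1/\lambda}\to e^{f'(1)T}$) and $p\to0$ (via the log-Euclidean limit $[(1-w)X^{p}+wY^{p}]^{1/p}\to\exp((1-w)\log X+w\log Y)$). Your approach replaces this two-step limit by a single Fr\'echet-derivative computation $h'(0)=f'(1)[(1-w)A-wB]$, which is legitimate because operator monotone functions on $(0,\infty)$ are analytic and the functional calculus is norm-continuous on spectra uniformly bounded away from $0$. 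Your method is shorter and more transparent for the two-variable case; the paper's limiting scheme, on the other hand, is exactly what is recycled in Section~4 (Theorem~\ref{thm-extension}) for $n$ operators, where a derivative of an abstract mean $M(\omega;\cdot)$ is not available and one must fall back on the harmonic--arithmetic sandwich.
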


Theorem \ref{prop1} is an extension of the following Theorem \ref{th:2} 
in \cite{UY2014} by Lemma \ref{lem2} introduced in the 
below. 
It was shown as a converse of Loewner-Heinz inequality.

\begin{thqt}[\cite{UY2014}]\label{th:2} Let $f(t)\in OM_{1}$ be non-constant,  and let 
 $A,B\in B(\mathcal{H})_{sa}$. Let $\sigma$ be an operator mean satisfying $! \leq_{1/2} \sigma\leq \nabla_{1/2}$. 
 Then  $A\leq B$ if and only if 
 $f(\lambda A+I) \sigma f(-\lambda B+I) \leq I $  for all sufficiently small $\lambda\geq 0$.  
\end{thqt}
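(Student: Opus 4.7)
The approach is to prove the two implications separately using tangent-line (concavity) inequalities for one direction and a first-order Taylor expansion at $\lambda = 0$ for the other. The key preliminary fact is that since $f$ and $\Phi$ are non-constant operator monotone on $(0,\infty)$, they are analytic and operator concave; in particular both are scalar concave with $v := f'(1) > 0$ and $\Phi'(1) = w$, so the tangent inequalities at $t=1$ yield
\[
f(t) \le 1 + v(t-1), \qquad \Phi(t) \le 1 + w(t-1), \qquad t>0.
\]
Functional calculus lifts the first to $f(X) \le I + v(X-I)$ for every $X \in B(\mathcal{H})_+$, and substituting the second into the Kubo--Ando formula $C \sigma D = C^{1/2}\Phi(C^{-1/2}DC^{-1/2})C^{1/2}$ gives the well-known bound $\sigma \le \nabla_w$, i.e.\ $C \sigma D \le (1-w)C + wD$.

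For (1)$\Rightarrow$(2) I would take $\lambda \ge 0$ small enough that $\lambda A + I$ and $-\lambda B + I$ are positive definite (e.g.\ $\lambda < 1/\max(\|A\|,\|B\|)$) and then simply chain the two tangent bounds:
\begin{align*}
f(\lambda A + I)\,\sigma\,f(-\lambda B + I)
&\le (1-w)f(\lambda A + I) + w f(-\lambda B + I) \\
&\le (1-w)(I + \lambda v A) + w(I - \lambda v B) \\
&= I + \lambda v\bigl[(1-w)A - wB\bigr].
\end{align*}
Assumption (1) says $(1-w)A - wB \le 0$, and since $\lambda v \ge 0$, the right-hand side is $\le I$.

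For (2)$\Rightarrow$(1) I would perform a first-order expansion of $g(\lambda) := f(\lambda A + I)\,\sigma\,f(-\lambda B + I)$ at $\lambda = 0$. Analyticity of operator monotone functions gives $f(\lambda A + I) = I + \lambda v A + O(\lambda^2)$ and $f(-\lambda B + I) = I - \lambda v B + O(\lambda^2)$, while expanding the Kubo--Ando formula around $(I,I)$ using $\Phi(1)=1$ and $\Phi'(1)=w$ yields the general identity $X \sigma Y = I + (1-w)(X-I) + w(Y-I) + O(\|X-I\|^2 + \|Y-I\|^2)$ for $X,Y$ near $I$. Combining,
\[
g(\lambda) = I + \lambda v\bigl[(1-w)A - wB\bigr] + O(\lambda^2).
\]
From (2), $g(\lambda) - I \le 0$ on some $[0,\lambda_0]$; dividing by $\lambda > 0$ and letting $\lambda \to 0^+$ gives $v[(1-w)A - wB] \le 0$, which is (1) since $v > 0$.

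The main technical point is the derivation of the first-order expansion of $g$ in the non-commuting setting: one must carefully expand $X^{\pm 1/2}$ and the functional calculus $\Phi(X^{-1/2}YX^{-1/2})$ around the identity with a uniform operator-norm remainder. Apart from that routine calculation, the heart of the argument is the simple observation that both upper bounds ($\sigma \le \nabla_w$ and $f(X) \le I + v(X-I)$) collapse to a single scalar concavity/tangent-line fact together with the normalizations $\Phi'(1)=w$ and $f'(1)=v$.
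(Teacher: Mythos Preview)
Your argument is correct and in fact proves the paper's more general Theorem~\ref{prop1} (arbitrary $w=\Phi'(1)$), of which the stated result is the case $w=\tfrac12$.  The forward direction $(1)\Rightarrow(2)$ is essentially the paper's: both pass through $\sigma\le\nabla_{w}$ and concavity of $f$ (you use the tangent bound $f(X)\le I+v(X-I)$, the paper uses monotonicity of $f$ together with $f\bigl(\sum w_i X_i\bigr)\ge\sum w_i f(X_i)$), landing on the same linear upper bound $I+\lambda v[(1-w)A-wB]$.

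The reverse direction $(2)\Rightarrow(1)$ is genuinely different.  The paper never differentiates: it invokes the \emph{lower} bound $\sigma\ge{!_w}$ to obtain
\[
I\ \ge\ \Bigl[(1-w)f(\lambda A+I)^{-1}+wf(-\lambda B+I)^{-1}\Bigr]^{-1},
\]
then uses operator concavity of $t\mapsto t^{\lambda/p}$ for $0<\lambda\le p$ and passes to two successive limits, $\lambda\to 0$ via $\lim_{\lambda\to 0}f(\lambda a+1)^{1/\lambda}=e^{f'(1)a}$ and then $p\to 0$ via $\lim_{p\to 0}\bigl[(1-w)A^{p}+wB^{p}\bigr]^{1/p}=\exp\bigl((1-w)\log A+w\log B\bigr)$.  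Your route instead expands $X\sigma Y$ to first order at $(I,I)$ and reads off the derivative of $g$ at $0$.  This is shorter and conceptually cleaner, and it uses only $\Phi'(1)=w$ rather than the two-sided harmonic--arithmetic bounds; the cost is the technical verification (which you correctly flag) that the Kubo--Ando formula $X^{1/2}\Phi(X^{-1/2}YX^{-1/2})X^{1/2}$ has a uniform $O(\|X-I\|^{2}+\|Y-I\|^{2})$ remainder in operator norm---a routine but non-trivial calculation relying on analyticity of $\Phi$ and the functional calculus.  The paper's approach, by contrast, stays entirely within off-the-shelf operator inequalities and avoids any differentiability bookkeeping, at the price of the somewhat opaque double-limit argument.
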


To prove Theorem \ref{prop1}, we need the following lemma.

\begin{lem}\label{lem2}
Let $\Phi\in OM_{1}$.
Then for each $w\in (0,1)$, they are mutually equivalent:
\begin{enumerate}
\item $\Phi'(1)=w$, 
\item $[ (1-w)+wt^{-1}]^{-1}\leq \Phi(t)\leq (1-w)+wt $ for all $t\in (0,\infty)$.
\end{enumerate}
\end{lem}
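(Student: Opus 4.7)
The plan is to prove $(2)\Rightarrow(1)$ by a squeeze argument at the basepoint $t=1$, and $(1)\Rightarrow(2)$ by combining the concavity of operator monotone functions with the passage to the adjoint $\Phi^{*}(t):=1/\Phi(1/t)$.

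For $(2)\Rightarrow(1)$, I set $h(t)=[(1-w)+wt^{-1}]^{-1}$ and $g(t)=(1-w)+wt$. A direct computation gives $h(1)=g(1)=1$ and $h'(1)=g'(1)=w$. Since $\Phi$ is sandwiched between $h$ and $g$, all three values coincide at $t=1$, and every operator monotone function on $(0,\infty)$ is differentiable there, the usual squeeze principle for difference quotients forces $\Phi'(1)=w$.

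For $(1)\Rightarrow(2)$, the upper bound is immediate from Loewner's classical theorem that every operator monotone function $\Phi:(0,\infty)\to(0,\infty)$ is concave: the tangent line at $t=1$ then yields $\Phi(t)\leq 1+w(t-1)=(1-w)+wt$. For the lower bound I use the adjoint $\Phi^{*}(t)=1/\Phi(1/t)$, which is the representing function of the operator mean $\sigma^{\perp}$ given by $A\sigma^{\perp}B=(A^{-1}\sigma B^{-1})^{-1}$; hence $\Phi^{*}\in OM_{1}$, and a one-line chain-rule computation gives $(\Phi^{*})'(1)=\Phi'(1)/\Phi(1)^{2}=w$. Applying the already-proven upper bound to $\Phi^{*}$, substituting $t\mapsto 1/t$ and inverting, I obtain $\Phi(t)\geq [(1-w)+wt^{-1}]^{-1}$.

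The only real obstacle is the appeal to concavity of positive operator monotone functions (Loewner's theorem); everything else is routine. A self-contained alternative, should one wish to avoid that external input, is to use the Kubo--Ando integral representation $\Phi(t)=\int_{[0,\infty]}\frac{(1+s)t}{t+s}\,dm(s)$ for a probability measure $m$ on $[0,\infty]$, identify $\Phi'(1)=w$ as the resulting first-moment integral in $s/(1+s)$, and then derive both inequalities by a pointwise comparison of the integrand $(1+s)t/(t+s)$ against those of the weighted arithmetic and harmonic means; this also explains conceptually why those two means are the extremal operator means at a prescribed value of $\Phi'(1)$.
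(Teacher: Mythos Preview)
Your proof is correct and follows essentially the same strategy as the paper: both directions use the squeeze/tangent-line argument at $t=1$, and for $(1)\Rightarrow(2)$ both obtain the upper bound from concavity and then reduce the lower bound to the upper bound via a Kubo--Ando involution on $OM_{1}$. The only difference is cosmetic: the paper uses the transpose $t/\Phi(t)$ (whose derivative at $1$ is $1-w$), whereas you use the adjoint $\Phi^{*}(t)=1/\Phi(1/t)$ (whose derivative at $1$ is $w$); both choices lead to the same inequality after the appropriate substitution.
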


\begin{proof}
Proof of  (1) $\Longrightarrow$ (2) has been given in \cite[Lemma 2.2]{Ppreprint2014}. 
But we shall introduce its proof for 
the reader's convenience.
Since every operator monotone function is operator concave, 
$\Phi$ is a concave function. We have
$$ \Phi(t)\leq \Phi(1)+\Phi'(1)(t-1)=(1-w)+wt. $$
On the other hand, $\frac{t}{\Phi(t)}$ is also an operator monotone function, 
and 
$$ \frac{d}{dt}\frac{t}{\Phi(t)} \bigg|_{t=1}=1-w. $$
Then by the same argument as above, we have
$$ \frac{t}{\Phi(t)}\leq w+(1-w)t, $$
that is, 
$$ [ (1-w)+wt^{-1}]^{-1}\leq \Phi(t). $$

Conversely, we shall prove (2) $\Longrightarrow$ (1).
Since the tangent line of $ f(t)=[ (1-w)+wt^{-1}]^{-1}$ at $t=1$ is 
$y=(1-w)+wt$, $\Phi(t)$ has the same tangent line of  
$ [ (1-w)+wt^{-1}]^{-1}$ at $t=1$. Therefore $\Phi'(1)=w$.
\end{proof}

Before proving Theorem \ref{prop1}, we introduce the following formulas. 
For any differential function $f$ on $1$ and $w\in (0,1)$,
the following hold in the norm topology.
\begin{align}
& \lim_{\lambda\to 0}f(\lambda A+I)^{\frac{1}{\lambda}}=
e^{f'(1)A} \text{ for $A\in B(\mathcal{H})_{sa}$},
\label{eq: limit-f(1)}\\
& \lim_{p\to 0} \left[(1-w)A^{p}+wB^{p}\right]^{\frac{1}{p}}=
\exp\left[ (1-w)\log A+w\log B\right]
\text{ for $A,B\in B(\mathcal{H})_{+}$},
\label{eq:limit-exp}
\end{align}
where \eqref{eq: limit-f(1)} can be obtained by 
$\lim_{\lambda\to 0}f(\lambda a+1)^{\frac{1}{\lambda}}=e^{f'(1)a}$ for
$a\in \mathbb{R}$, and \eqref{eq:limit-exp} is 
introduced in \cite[(2.4)]{NC1988}, for example.

\begin{proof}[Proof of Theorem \ref{prop1}.]
By Lemma \ref{lem2}, $\Phi'(1)=w$ is equivalent to
\begin{equation}
 [(1-w)+wt^{-1}]^{-1}\leq \Phi(t) \leq (1-w)+wt
\quad\mbox{for all } t>0. 
\label{eq:Harmonic-Arithmetic means}
\end{equation}

We shall prove (1) $\Longrightarrow$ (2).
If $(1-w)A\leq wB$, then it is equivalent to 
$(1-w)(\lambda A+I)+w(-\lambda B+I)\leq I$ for all $\lambda \geq 0$.
Since $f$ is an operator concave function with $f(1)=1$, we have
\begin{align*}
I=f(I) & \geq f\left( (1-w)(\lambda A+I)+w(-\lambda B+I)\right) \\
& \geq (1-w) f(\lambda A+I)+wf(-\lambda B+I) \\
& \geq  f(\lambda A+I)\sigma f(-\lambda B+I),
\end{align*}
where the last inequality holds by \eqref{eq:Harmonic-Arithmetic means}.

Conversely, assume that 
$f(\lambda A+I)\sigma f(-\lambda B+I)\leq I$ for all sufficiently small 
$\lambda \geq 0$. By \eqref{eq:Harmonic-Arithmetic means}, we have
\begin{align*}
I & \geq f(\lambda A+I)\sigma f(-\lambda B+I) \\
& \geq 
\left[ (1-w)f(\lambda A+I)^{-1}+wf(-\lambda B +I)^{-1}\right]^{-1} \\
& \geq 
\left[ (1-w)f(\lambda A+I)^{\frac{-p}{\lambda}}+wf(-\lambda B+I)^{\frac{-p}{\lambda}}\right]^{\frac{-\lambda}{p}}
\end{align*}
for all $0<\lambda \leq p$,
where the last inequality follows from the operator concavity of 
$t^{\alpha}$ for $\alpha\in [0,1]$.
Then we have
$$ 
\left[ (1-w)f(\lambda A+I)^{\frac{-p}{\lambda}}+wf(-\lambda B+I)^{\frac{-p}{\lambda}}\right]^{\frac{-1}{p}}\leq I. $$
By letting $\lambda \to 0$ and \eqref{eq: limit-f(1)}, we have
$$ 
\left[ (1-w)e^{-pf'(1)A}+we^{pf'(1)B}\right]^{\frac{-1}{p}}\leq I, $$
and $p\to 0$, we have
$$ \exp\left( -(1-w)f'(1)A+wf'(1)B\right) \geq I $$
by \eqref{eq:limit-exp}. It is equivalent to $(1-w)A\leq wB$.
\end{proof}

A kind of a converse of Theorem \ref{prop1} 
can be considered as follows.

\begin{prop}\label{prop-converse}
Let $\Phi, f\in OM_{1}$ be non-constant, and let $\sigma$ be 
an operator mean whose representing 
 function is $\Phi$.   
For $A, B\in B(\mathcal{H})_{sa}$ and $w\in (0,1)$, 
if $f(\lambda A+I)\sigma f(-\lambda B+I)\leq I$ 
holds for all sufficiently small $\lambda \geq  0$
whenever $(1-w)A\leq wB$.
Then $\Phi'(1)=w$.
\end{prop}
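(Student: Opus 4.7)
The plan is to specialize the hypothesis to scalar multiples of the identity, thereby reducing the whole statement to a one-variable calculus test at $\lambda = 0$. Taking $A = \alpha I$ and $B = \beta I$ for arbitrary $\alpha,\beta \in \mathbb{R}$, and using the scalar case $aI\,\sigma\,bI = a\,\Phi(b/a)\,I$ (valid for $a,b>0$) of the Kubo-Ando formula, the inequality $f(\lambda A + I)\,\sigma\,f(-\lambda B + I) \leq I$ reduces to $F(\lambda) \leq 1$, where
$$ F(\lambda) := f(1+\lambda\alpha)\,\Phi\!\left(\frac{f(1-\lambda\beta)}{f(1+\lambda\alpha)}\right). $$
The operator premise $(1-w)A \leq wB$ collapses in parallel to the scalar condition $(1-w)\alpha \leq w\beta$.

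Since $F(0) = 1$, the requirement $F(\lambda)\leq 1$ for all sufficiently small $\lambda\geq 0$ forces $F'(0) \leq 0$. A routine chain-rule computation, which I expect to be the main technical step, yields
$$ F'(0) = f'(1)\bigl[\alpha - \Phi'(1)(\alpha+\beta)\bigr]. $$
Because $f\in OM_{1}$ is non-constant, it is strictly increasing on $(0,\infty)$ and satisfies $f'(1) > 0$ (an elementary consequence of operator concavity applied to both $f$ and $t/f(t)$). Setting $v := \Phi'(1)$, the inequality $F'(0) \leq 0$ simplifies to $(1-v)\alpha \leq v\beta$, so the hypothesis of the proposition has been reduced to the purely linear implication
$$ (1-w)\alpha \leq w\beta \;\Longrightarrow\; (1-v)\alpha \leq v\beta \qquad \text{for all }\alpha,\beta\in\mathbb{R}. $$

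To conclude, I would test this implication on the two pairs $(\alpha,\beta) = (w,1-w)$ and $(\alpha,\beta) = (-w,-(1-w))$: each satisfies the premise with equality (both sides equal $w(1-w)$, resp. $-w(1-w)$), and the two resulting conclusions give $w \leq v$ and $v \leq w$, forcing $\Phi'(1) = w$. The genuine technical content is therefore concentrated in the $F'(0)$ calculation and in the fact $f'(1) > 0$; the remainder is a half-plane containment through the origin resolved by two scalar tests. This route notably bypasses any direct invocation of Theorem \ref{prop1}, although in spirit it is exactly the scalar shadow of that theorem.
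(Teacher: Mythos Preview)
Your proof is correct and follows essentially the same approach as the paper's: both specialize to scalar multiples of the identity, differentiate the resulting one-variable expression at $\lambda=0$, and exploit the sign freedom in the scalar parameter to pin down $\Phi'(1)$. The paper parametrizes directly on the boundary line by taking $A=wtI$, $B=(1-w)tI$ for $t\in\mathbb{R}$, obtaining $t\Phi'(1)\geq wt$ for all real $t$ and hence $\Phi'(1)=w$; your two test pairs $(\alpha,\beta)=(\pm w,\pm(1-w))$ are precisely the cases $t=\pm1$ of that parametrization.
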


\begin{proof}
We may assume $f'(1)>0$. 
Let $A=wtI$ and $B=(1-w)tI$ for a real number $t$. 
Then we have $(1-w)A\leq wB$. By the assumption, we have 
$f(\lambda tw+1)\sigma f(-\lambda t(1-w)+1)\leq 1 $ holds for 
all sufficiently small $\lambda \geq 0$. It is equivalent to
$$ \Phi \left(\frac{f(-\lambda t(1-w)+1)}{f(\lambda tw+1)}\right)\leq 
\frac{1}{f(\lambda tw+1)}. $$
For each $\lambda >0$, we have
$$ \frac{ \Phi \left(\frac{f(-\lambda (1-w)t+1)}{f(\lambda tw+1)}\right)-1}
{\lambda}\leq 
\frac{\frac{1}{f(\lambda wt+1)}-1}{\lambda}. $$
Letting $\lambda \to 0$, the right-hand side of the 
above inequality converges to
$$ \frac{\partial}{\partial \lambda} \frac{1}{f(\lambda wt+1)}\biggl|_{\lambda =0}
=
\frac{-wtf'(\lambda wt+1)}{f(\lambda wt+1)^{2}} \biggl|_{\lambda =0}=
-wtf'(1) $$
by the assumption $f(1)=1$.
On the other hand, the left-hand side is 
$$ \frac{\partial}{\partial \lambda} 
\Phi \left(\frac{f(-\lambda (1-w)t+1)}{f(\lambda tw+1)}\right) \biggl|_{\lambda=0}=
-t\Phi'(1)f'(1) $$
by the assumption $\Phi(1)=1$.
Hence we have $t\Phi'(1)\geq wt$ for all real number $t$. Hence we have 
$\Phi'(1)=w$.
\end{proof}

\section{More than three operators case}
Let $\mathbb{A}=(A_{1},...,A_{n})\in B(\mathcal{H})_{+}^{n}$ and $\omega=(w_{1},...,w_{n})\in \Delta_{n}$. Define 
$$\displaystyle \frak{A}(\omega; \mathbb{A})=\sum_{i=1}^{n}w_{i}A_{i}\quad \mbox{and}\quad 
\displaystyle \frak{H}(\omega; \mathbb{A})=
\left(\sum_{i=1}^{n}w_{i} A_{i}^{-1}\right)^{-1}.$$
As an extension of the Karcher mean, the power mean is given by 
Lim-P\'aifia \cite{LP2012}  as follows.
Let $\mathbb{A}=(A_{1},...,A_{n})\in B(\mathcal{H})_{+}^{n}$ and 
$\omega=(w_{1},...,w_{n})\in \Delta_{n}$.
For $t\in (0,1]$, the power mean 
$P_{t}(\omega; \mathbb{A})$ is defined by the unique positive 
definite solution of 
$$ X=\sum_{k=1}^{n}w_{k} X\sharp_{t}A_{k}, $$
and for $t\in[-1,0)$, the power mean 
$P_{t}(\omega; \mathbb{A})$ is defined by 
$P_{t}(\omega; \mathbb{A})=P_{-t}
(\omega; \mathbb{A}^{-1})^{-1}$ (see also \cite{LL2014}).
We remark that $P_{t}(\omega;\mathbb{A})$ converges to 
the Karcher mean $\Lambda(\omega; \mathbb{A})$ as $t\to 0$, strongly. So 
we can consider $P_{0}(\omega; \mathbb{A})$ as $\Lambda(\omega; \mathbb{A})$.
It is known that the Karcher mean also satisfies all properties (P1) -- (P10) in Section 2 (cf. \cite{BH2006, LL2010, LL2014}).
It is easy to see that $P_{1}(\omega; \mathbb{A})=
\frak{A}(\omega; \mathbb{A})$ and  
$P_{-1}(\omega; \mathbb{A})=
\frak{H}(\omega; \mathbb{A})$. Moreover 
$P_{t}(\omega; \mathbb{A})$ is increasing on $t\in [-1,1]$. Hence
the power mean interpolates arithmetic-geometric-harmonic means.
In \cite{UY2014}, we had a generalization of Theorem \ref{th:2} as follows.

\begin{thqt}[\cite{UY2014}]\label{thm: power mean}
Let $T_{1},..., T_{n}$ be Hermitian matrices, and $\omega=(w_{1},...,w_{n})\in \Delta_{n}$.
Let $f \in OM_{1}$ be non-constant.
Then the following assertions are equivalent:
\begin{enumerate}
\item $\displaystyle \sum_{i=1}^{n} w_{i} T_{i}\leq 0$, 
\item $\displaystyle P_{1}(\omega; f(\lambda T_{1}+I),...,f(\lambda T_{n}+I))=\sum_{i=1}^{n}w_{i} f(\lambda T_{i}+I)\leq I$ for all sufficiently small $\lambda\geq 0$, 
\item for each $t\in [-1,1]$, $P_{t} (\omega; f(\lambda T_{1}+I),..., f(\lambda T_{n}+I))\leq I$ for all sufficiently small $\lambda \geq 0$.
\end{enumerate}
\end{thqt}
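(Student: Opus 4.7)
The plan is to exploit the monotonicity of the power mean $P_t$ in $t$ to reduce the whole statement to the single equivalence (1) $\Leftrightarrow$ (2), and then carry out that equivalence by a concavity/monotonicity argument one way and a derivative-at-$\lambda=0$ argument the other. Indeed, (3) specialises to (2) by setting $t=1$; conversely, since $t \mapsto P_t(\omega; \mathbb{A})$ is recorded in the preceding paragraph to be increasing on $[-1,1]$ with $P_1$ equal to the arithmetic mean, (2) forces $P_t(\omega; \dots) \leq P_1(\omega; \dots) \leq I$ for every $t \in [-1,1]$, which is (3). Hence (2) $\Leftrightarrow$ (3) is immediate, and the content of the theorem is the equivalence of (1) with the single inequality $\sum_i w_i f(\lambda T_i + I) \leq I$.

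For (1) $\Rightarrow$ (2) I would use two classical facts about $f \in OM_1$: $f$ is operator monotone and, being a non-negative operator monotone function on $(0,\infty)$, also operator concave. Given $\sum_i w_i T_i \leq 0$, for all sufficiently small $\lambda \geq 0$ every $\lambda T_i + I$ is positive definite, so operator concavity of $f$ yields
\begin{equation*}
\sum_{i=1}^n w_i\, f(\lambda T_i + I) \leq f\!\left(I + \lambda \sum_{i=1}^n w_i T_i\right),
\end{equation*}
and since $I + \lambda \sum_i w_i T_i \leq I$, operator monotonicity of $f$ bounds the right-hand side by $f(I) = I$.

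For the harder direction (2) $\Rightarrow$ (1) I would mimic the differentiation step at the end of the proof of Theorem \ref{prop1}, which becomes much cleaner here because the outer mean is already the arithmetic mean. Subtracting $I = \sum_i w_i f(I)$ from the hypothesis and dividing by $\lambda > 0$ gives
\begin{equation*}
\sum_{i=1}^n w_i\, \frac{f(\lambda T_i + I) - f(I)}{\lambda} \leq 0.
\end{equation*}
Because the $T_i$ are Hermitian matrices (bounded spectrum) and $f$ is real-analytic at $1$, each difference quotient converges in norm to $f'(1)\, T_i$ as $\lambda \to 0^+$ by routine functional calculus. Thus $f'(1)\sum_i w_i T_i \leq 0$, and since $f \in OM_1$ is non-constant, Loewner's integral representation forces $f'(1)>0$ --- the same positivity invoked in the proof of Proposition \ref{prop-converse} --- so dividing through yields $\sum_i w_i T_i \leq 0$.

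The only genuinely delicate points are the justification that $f'(1) > 0$ and the norm convergence of the difference quotient, but both are standard and already implicit in the arguments of the excerpt. No subtler ingredient --- no Thompson-metric analysis of the non-arithmetic power means and no operator-convexity trick of the sort employed in the proof of Theorem \ref{prop1} --- is needed here, precisely because the monotonicity $P_t \leq P_1$ collapses conditions (2) and (3) to a single inequality involving only the arithmetic mean.
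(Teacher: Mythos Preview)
Your argument is correct. The reduction (2) $\Leftrightarrow$ (3) via the monotonicity $P_t\le P_1$ is exactly what the paper relies on implicitly, and your (1) $\Rightarrow$ (2) via operator concavity and monotonicity of $f$ coincides with the paper's computation.

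Where you genuinely diverge is in (2) $\Rightarrow$ (1). The paper does not give a separate proof of Theorem~\ref{thm: power mean}; it deduces it from the more general Theorem~\ref{thm-extension} by setting $\Phi(\omega;\mathbb{A};x)=\langle P_t(\omega;\mathbb{A})x,x\rangle$. In that framework the backward direction is obtained by first passing to the \emph{harmonic}-mean lower bound $\frak{H}\le P_t$, then using operator concavity of $t^{\alpha}$ and the two-step limit $\lambda\to 0$ (via \eqref{eq: limit-f(1)}) followed by $p\to 0$ (via \eqref{eq:limit-exp}) --- the same machinery as in the proof of Theorem~\ref{prop1}. Your route is more direct: since condition (2) is already an operator inequality for the \emph{arithmetic} mean, you can simply form the difference quotient $\lambda^{-1}\sum_i w_i(f(\lambda T_i+I)-I)\le 0$ and pass to the limit $\lambda\to 0$ to get $f'(1)\sum_i w_i T_i\le 0$. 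This is legitimate and shorter, and the two ingredients you flag (norm convergence of the difference quotient via functional calculus, and $f'(1)>0$ for non-constant $f\in OM_1$) are indeed routine.

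What the paper's detour through Theorem~\ref{thm-extension} buys is generality: the harmonic-mean/double-limit argument works for any functional $\Phi$ sandwiched as in \eqref{eq:assumption-extension}, where one only has \emph{norm} bounds and cannot differentiate an operator inequality directly. Your argument exploits the special feature that $P_1$ is the arithmetic mean, so the operator inequality itself is available; this makes the proof of Theorem~\ref{thm: power mean} self-contained but does not extend to the ALM/BMP/log-Euclidean settings of Corollary~\ref{cor1}.
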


Here we shall generalize the above result into the following 
Theorem \ref{thm-extension}.

\begin{thm}\label{thm-extension}
Let  $f\in OM_{1}$ be non-constant, 
and let
$\Phi:\ \Delta_{n}\times B(\mathcal{H})_{+}^{n}\times \mathcal{H}\to \mathbb{R}^{+}$ satisfying
\begin{equation}
 \| \frak{H}(\omega; \mathbb{A})\| \leq 
\sup_{\|x\|=1} \Phi(\omega; \mathbb{A}; x) \leq \| \frak{A}(\omega; \mathbb{A})\|
\label{eq:assumption-extension}
\end{equation}
for all $\mathbb{A}\in B(\mathcal{H})_{+}^{n}$ and 
$\omega\in \Delta_{n}$.
Then for $\mathbb{T}=(T_{1},...,T_{n})\in B(\mathcal{H})_{sa}^{n}$
and $\omega=(w_{1},...,w_{n})\in \Delta_{n}$, they are mutually equivalent:
\begin{enumerate}
\item $\displaystyle \sum_{i=1}^{n}w_{i}T_{i}\leq 0$, 
\item $\Phi(\omega; f(\lambda T_{1}+I),..., f(\lambda T_{n}+I); x)\leq 1$ 
for all sufficiently small $\lambda \geq 0$ and all unit vector $x\in \mathcal{H}$.
\end{enumerate}
\end{thm}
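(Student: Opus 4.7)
The sandwich assumption \eqref{eq:assumption-extension} is tailor-made to reduce statement (2) to the arithmetic bound $\frak{A}(\omega;f(\lambda\mathbb{T}+I))\le I$ in one direction and the harmonic bound $\frak{H}(\omega;f(\lambda\mathbb{T}+I))\le I$ in the other. My plan is therefore to establish these two bounds and then deploy, in the $n$-variable setting, the same concavity-plus-double-limit argument that powers the proof of Theorem \ref{prop1}. Note that Theorem A (Theorem \ref{thm: power mean}) is already a special case, corresponding to $\Phi(\omega;\mathbb{A};x)=\langle P_{t}(\omega;\mathbb{A})x,x\rangle$, whose sandwich is supplied by property (P10); I only need to see that the argument relies on nothing more than that sandwich.

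\textbf{Proof of (1) $\Rightarrow$ (2).} Suppose $\sum_{i}w_{i}T_{i}\le 0$ and pick $\lambda\ge 0$ small enough that each $\lambda T_{i}+I$ is positive definite. Since $f\in OM_{1}$ is operator concave on $(0,\infty)$ with $f(1)=1$, operator Jensen and operator monotonicity give
\[
\sum_{i=1}^{n}w_{i}f(\lambda T_{i}+I)\le f\!\left(I+\lambda\sum_{i=1}^{n}w_{i}T_{i}\right)\le f(I)=I,
\]
so $\|\frak{A}(\omega;f(\lambda T_{1}+I),\ldots,f(\lambda T_{n}+I))\|\le 1$. The right half of \eqref{eq:assumption-extension} then forces $\Phi(\omega;f(\lambda T_{1}+I),\ldots,f(\lambda T_{n}+I);x)\le 1$ for every unit vector $x$.

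\textbf{Proof of (2) $\Rightarrow$ (1).} This is the substantive step. Condition (2) combined with the left half of \eqref{eq:assumption-extension} yields $\frak{H}(\omega;f(\lambda\mathbb{T}+I))\le I$, i.e.
\[
\sum_{i=1}^{n}w_{i}f(\lambda T_{i}+I)^{-1}\ge I\quad\text{for all sufficiently small } \lambda\ge 0.
\]
Fix $p>0$ small and take $0<\lambda\le p$. Because $\lambda/p\in (0,1]$, operator concavity of $t^{\lambda/p}$ gives
\[
\sum_{i=1}^{n}w_{i}f(\lambda T_{i}+I)^{-1}=\sum_{i=1}^{n}w_{i}\bigl(f(\lambda T_{i}+I)^{-p/\lambda}\bigr)^{\lambda/p}\le\left(\sum_{i=1}^{n}w_{i}f(\lambda T_{i}+I)^{-p/\lambda}\right)^{\lambda/p},
\]
and a spectral comparison with $I$ (using $Y^{\lambda/p}\ge I \iff Y\ge I$ for positive $Y$ and $\lambda/p>0$) upgrades this to $\sum_{i}w_{i}f(\lambda T_{i}+I)^{-p/\lambda}\ge I$. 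Letting $\lambda\to 0$, \eqref{eq: limit-f(1)} together with closedness of $\{X:X\ge I\}$ in the norm topology delivers
\[
\sum_{i=1}^{n}w_{i}e^{-pf'(1)T_{i}}\ge I\quad\text{for every small }p>0.
\]
The same spectral comparison raises this to $\bigl(\sum_{i}w_{i}e^{-pf'(1)T_{i}}\bigr)^{1/p}\ge I$, and letting $p\to 0$ through the $n$-variable analogue of \eqref{eq:limit-exp} yields $\exp\!\bigl(-f'(1)\sum_{i}w_{i}T_{i}\bigr)\ge I$. Since $f'(1)>0$ for a non-constant $f\in OM_{1}$ (immediate from Lemma \ref{lem2}, because $f'(1)=0$ would combine with concavity and monotonicity to force $f\equiv 1$), this is equivalent to $\sum_{i}w_{i}T_{i}\le 0$, closing the equivalence.

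\textbf{Main obstacle.} The implication (1) $\Rightarrow$ (2) is essentially a one-line application of Jensen; all the real work is in (2) $\Rightarrow$ (1), and it is precisely the delicate orchestration of operator concavity, order inversion, and the iterated limit $\lambda\to 0$ then $p\to 0$ already seen in the proof of Theorem \ref{prop1}. The sandwich hypothesis \eqref{eq:assumption-extension} is exactly the ingredient that lets this two-variable machinery migrate to $n$ variables without any additional structural demand on $\Phi$ beyond the norm sandwich.
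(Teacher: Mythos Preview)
Your proof is correct and follows essentially the same route as the paper's: operator concavity of $f$ plus the right-hand norm sandwich for (1) $\Rightarrow$ (2), and for (2) $\Rightarrow$ (1) the left-hand sandwich to reach $\frak{H}\le I$, then the concavity of $t^{\lambda/p}$ followed by the iterated limits $\lambda\to 0$ (via \eqref{eq: limit-f(1)}) and $p\to 0$ (via the $n$-variable analogue of \eqref{eq:limit-exp}). The only cosmetic difference is that you phrase the intermediate inequalities as $\sum_i w_i f(\lambda T_i+I)^{-p/\lambda}\ge I$ rather than the paper's equivalent $\bigl[\sum_i w_i f(\lambda T_i+I)^{-p/\lambda}\bigr]^{-\lambda/p}\le I$.
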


In fact, we obtain Theorem \ref{thm: power mean} by putting 
$ \Phi (\omega; \mathbb{A}; x)=\langle P_{t}(\omega; \mathbb{A})x,x\rangle $
in Theorem \ref{thm-extension}.

\begin{proof}[Proof of Theorem \ref{thm-extension}]
First of all, we may assume $f'(1)>0$.
Firstly, we shall prove (1) $\Longrightarrow$ (2).
For each $\lambda > 0$, (1) is equivalent to 
$$ \sum_{i=1}^{n}w_{i}(\lambda T_{i}+I)\leq I. $$
Since operator concavity of $f$ and $f(1)=1$, we have
\begin{align*}
I=f(I) 
& \geq 
f\left( \sum_{i=1}^{n}w_{i}(\lambda T_{i}+I)\right) \\
& \geq 
 \sum_{i=1}^{n}w_{i}f(\lambda T_{i}+I) 
 =
\frak{A}(\omega;  f(\lambda T_{1}+I),...,f(\lambda T_{n}+I)).
\end{align*}
Here by \eqref{eq:assumption-extension}, 
\begin{align*}
1 & \geq \| \frak{A}(\omega;  f(\lambda T_{1}+I),...,f(\lambda T_{n}+I))\| \\
& \geq 
\sup_{\|x\|=1} \Phi(\omega; f(\lambda T_{1}+I),...,f(\lambda T_{n}+I); x),
\end{align*}
we have
$$ 1\geq \Phi(\omega; f(\lambda T_{1}+I),..., f(\lambda T_{n}+I); x) $$
for all unit vector $x\in \mathcal{H}$, i.e., (2).

Conversely, we shall prove (2) $\Longrightarrow$ (1).
By \eqref{eq:assumption-extension}, we have
\begin{align*}
1 & \geq
\sup_{\|x\|=1} \Phi(\omega; f(\lambda T_{1}+I),..., f(\lambda T_{n}+I); x)\\
& \geq 
\| \frak{H}(\omega;  f(\lambda T_{1}+I),...,f(\lambda T_{n}+I))\| .
\end{align*}
Then 
\begin{align*}
I & \geq 
\frak{H}(\omega;  f(\lambda T_{1}+I),...,f(\lambda T_{n}+I))\\
& =
\left[\sum_{i=1}^{n}w_{i}f(\lambda T_{i}+I)^{-1}\right]^{-1}
 \geq 
\left[\sum_{i=1}^{n}w_{i}f(\lambda T_{i}+I)^{\frac{-p}{\lambda}}\right]^{\frac{-\lambda}{p}}
\end{align*}
for all $0<\lambda\leq p$ since $t^{\alpha}$ is operator concave for 
$\alpha\in [0,1]$.
Hence we have
$$ \left[\sum_{i=1}^{n}w_{i}f(\lambda T_{i}+I)^{\frac{-p}{\lambda}}\right]^{\frac{-1}{p}}
\leq I. $$
By letting $\lambda \to 0$ and \eqref{eq: limit-f(1)}, we obtain
$$ \left[\sum_{i=1}^{n}w_{i}e^{-pf'(1)T_{i}}\right]^{\frac{-1}{p}}
\leq I, $$
and $p\to 0$, we have
$ f'(1)\sum_{i=1}^{n}w_{i}T_{i}\leq 0$, that is, (1).
\end{proof}

\begin{cor}\label{Corollary norm}
Let  $f\in OM_{1}$ be non-constant.
Then for $\mathbb{T}=(T_{1},...,T_{n})\in B(\mathcal{H})_{sa}^{n}$ and 
$\omega=(w_{1},...,w_{n})\in \Delta_{n}$,
they are mutually equivalent:
\begin{enumerate}
\item $\displaystyle \sum_{i=1}^{n}w_{i}T_{i}\leq 0$, 
\item $\prod_{i=1}^{n} \| f(\lambda T_{i}+I)^{\frac{1}{2}}x\|^{w_{i}} \leq 1$ 
for all sufficiently small $\lambda > 0$ and all unit vector $x\in \mathcal{H}$,
%
\end{enumerate}
\end{cor}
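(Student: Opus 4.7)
The plan is to deduce this corollary from Theorem \ref{thm-extension} by choosing the functional $\Phi$ to be the weighted geometric mean of the scalar quadratic forms. Concretely, I would set
\[
\Phi(\omega;\mathbb{A};x)=\prod_{i=1}^{n}\|A_{i}^{1/2}x\|^{2w_{i}}=\prod_{i=1}^{n}\langle A_{i}x,x\rangle^{w_{i}}
\]
for $\mathbb{A}=(A_{1},\dots,A_{n})\in B(\mathcal{H})_{+}^{n}$, $\omega\in\Delta_{n}$ and $x\in\mathcal{H}$. With this choice, the inequality $\Phi(\omega;f(\lambda T_{1}+I),\dots,f(\lambda T_{n}+I);x)\leq 1$ is, after taking a square root, precisely condition (2) of the corollary, since all factors are nonnegative.

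The substantive step is to verify the hypothesis \eqref{eq:assumption-extension} for this $\Phi$. For the upper bound, the scalar weighted AM--GM inequality applied to $\langle A_{i}x,x\rangle>0$ gives, for $\|x\|=1$,
\[
\Phi(\omega;\mathbb{A};x)\leq\sum_{i=1}^{n}w_{i}\langle A_{i}x,x\rangle=\langle\mathfrak{A}(\omega;\mathbb{A})x,x\rangle\leq\|\mathfrak{A}(\omega;\mathbb{A})\|.
\]
For the lower bound, I would use the Cauchy--Schwarz consequence $\langle A_{i}x,x\rangle\langle A_{i}^{-1}x,x\rangle\geq\|x\|^{4}=1$ (via $\|x\|^{2}=\langle A_{i}^{1/2}x,A_{i}^{-1/2}x\rangle$) together with weighted AM--GM on $\langle A_{i}^{-1}x,x\rangle$:
\[
\Phi(\omega;\mathbb{A};x)\geq\prod_{i=1}^{n}\langle A_{i}^{-1}x,x\rangle^{-w_{i}}\geq\Bigl(\sum_{i=1}^{n}w_{i}\langle A_{i}^{-1}x,x\rangle\Bigr)^{-1}=\langle\mathfrak{H}(\omega;\mathbb{A})^{-1}x,x\rangle^{-1}.
\]
Taking the supremum over unit vectors $x$ and using that $\sup_{\|x\|=1}\langle\mathfrak{H}^{-1}x,x\rangle^{-1}=(\inf\mathrm{spec}(\mathfrak{H}^{-1}))^{-1}=\|\mathfrak{H}\|$, I obtain $\sup_{\|x\|=1}\Phi(\omega;\mathbb{A};x)\geq\|\mathfrak{H}(\omega;\mathbb{A})\|$.

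Once \eqref{eq:assumption-extension} is established for this $\Phi$, Theorem \ref{thm-extension} applies verbatim and delivers the equivalence of (1) with $\Phi(\omega;f(\lambda T_{1}+I),\dots,f(\lambda T_{n}+I);x)\leq 1$ for all unit $x$ and all sufficiently small $\lambda>0$, which is exactly (2). The main obstacle is really the lower bound in \eqref{eq:assumption-extension}: one has to chain Cauchy--Schwarz, weighted AM--GM, and the spectral identification of the supremum in the correct order, but the upper bound and the final invocation of Theorem \ref{thm-extension} are straightforward.
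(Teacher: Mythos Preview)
Your proposal is correct and follows the paper's overall strategy: choose the same functional $\Phi(\omega;\mathbb{A};x)=\prod_{i}\langle A_{i}x,x\rangle^{w_{i}}$, verify \eqref{eq:assumption-extension}, and invoke Theorem~\ref{thm-extension}. The upper bound argument (scalar weighted AM--GM) is identical to the paper's.

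The difference lies in the lower bound. The paper appeals to the weighted arithmetic--Karcher mean inequality of \cite{Y2013}, namely $\prod_{i}\langle A_{i}x,x\rangle^{w_{i}}\geq\langle\Lambda(\omega;\mathbb{A})x,x\rangle$, and then uses $\Lambda(\omega;\mathbb{A})\geq\frak{H}(\omega;\mathbb{A})$ from (P10) to get the pointwise bound $\Phi(\omega;\mathbb{A};x)\geq\langle\frak{H}(\omega;\mathbb{A})x,x\rangle$; taking the supremum over unit $x$ immediately yields $\|\frak{H}(\omega;\mathbb{A})\|$. Your route is more elementary and self-contained: Cauchy--Schwarz gives $\langle A_{i}x,x\rangle\geq\langle A_{i}^{-1}x,x\rangle^{-1}$, weighted AM--GM on the inverses produces $\Phi(\omega;\mathbb{A};x)\geq\langle\frak{H}(\omega;\mathbb{A})^{-1}x,x\rangle^{-1}$, and the spectral identity $\sup_{\|x\|=1}\langle\frak{H}^{-1}x,x\rangle^{-1}=\bigl(\min\sigma(\frak{H}^{-1})\bigr)^{-1}=\|\frak{H}\|$ closes the gap. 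Your pointwise estimate is weaker than the paper's (indeed $\langle\frak{H}^{-1}x,x\rangle^{-1}\leq\langle\frak{H}x,x\rangle$ by Cauchy--Schwarz again), but after the supremum both yield exactly $\|\frak{H}(\omega;\mathbb{A})\|$. The advantage of your argument is that it avoids the external reference \cite{Y2013} and the Karcher mean altogether; the paper's argument, on the other hand, gives a stronger intermediate inequality and is shorter once that reference is granted.
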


\begin{proof}
For $\mathbb{A}=(A_{1},...,A_{n})\in B(\mathcal{H})_{+}^{n}$, let 
 $\Phi(\omega; \mathbb{A}; x)=\prod_{i=1}^{n} 
\| A_{i}^{\frac{1}{2}}x\|^{2w_{i}}$. 
We shall only check  
$$ \| \frak{H}(\omega; \mathbb{A})\| \leq \sup_{\|x\|=1}\prod_{i=1}^{n} \| A_{i}^{\frac{1}{2}}x\|^{2w_{i}} \leq 
\| \frak{A}(\omega; \mathbb{A})\| $$
for all $\mathbb{A}=(A_{1},...,A_{n})\in B(\mathcal{H})_{+}^{n}$ and $\omega=(w_{1},...,w_{n})\in \Delta_{n}$.
Firstly, we shall show 
$\sup_{\|x\|=1}\prod_{i=1}^{n} \| A_{i}^{\frac{1}{2}}x\|^{2w_{i}} \leq 
\| \frak{A}(\omega; \mathbb{A})\|$.
\begin{align*}
\prod_{i=1}^{n} \| A_{i}^{\frac{1}{2}}x\|^{2w_{i}} 
 =
\prod_{i=1}^{n} \langle A_{i}x,x\rangle^{w_{i}} 
 \leq 
\sum_{i=1}^{n} w_{i} \langle A_{i}x,x\rangle
 =
\langle \frak{A}(\omega; \mathbb{A})x,x\rangle. 
\end{align*}
Hence, we have $\sup_{\|x\|=1}\prod_{i=1}^{n} \| A_{i}^{\frac{1}{2}}x\|^{2w_{i}} \leq 
\| \frak{A}(\omega; \mathbb{A})\|$.

Next, we shall prove 
$ \| \frak{H}(\omega; \mathbb{A})\| \leq \sup_{\|x\|=1}\prod_{i=1}^{n} \| A_{i}^{\frac{1}{2}}x\|^{2w_{i}}$.
\begin{align*}
\prod_{i=1}^{n} \| A_{i}^{\frac{1}{2}}x\|^{2w_{i}}
& =
\prod_{i=1}^{n} \langle  A_{i}x,x \rangle^{w_{i}} \\
& \geq 
\langle \Lambda(\omega; \mathbb{A}) x,x\rangle 
\quad \text{(by \cite{Y2013})}\\
& \geq 
\langle \frak{H}(\omega; \mathbb{A})x,x\rangle.
\end{align*}
Therefore the proof is completed by Theorem \ref{thm-extension}.
\end{proof}

Corollary \ref{Corollary norm} is an extension of the following result:

\begin{thqt}[\cite{UY2014}]\label{thm: Karcher mean non-weighted}
Let $T_{1},..., T_{n}$ be Hermitian matrices, and 
let $f\in OM_{1}$ be non-constant.
Then the following are equivalent:
\begin{enumerate}
\item $\displaystyle \sum_{i=1}^{n}T_{i}\leq 0$, 
\item $\displaystyle \|x\|^{n} \leq \prod_{i=1}^{n} \|f(\lambda T_{i}+I)^{\frac{-1}{2}} x\|$ for all 
sufficiently small $\lambda\geq 0$ and all  $x\in \mathcal{H}$.
\end{enumerate}
\end{thqt}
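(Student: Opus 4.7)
The plan is to realize this theorem as a direct consequence of Theorem~\ref{thm-extension} by choosing the right functional $\Phi$. Specifically, I would set
\[
\Phi(\omega;\mathbb{A};x)=\prod_{i=1}^{n}\langle A_{i}^{-1}x,x\rangle^{-w_{i}},
\qquad \mathbb{A}=(A_{1},\dots,A_{n})\in B(\mathcal{H})_{+}^{n},
\]
and then specialize the equivalence Theorem~\ref{thm-extension} produces to the uniform weight $\omega=(1/n,\dots,1/n)$.

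The main technical step is to verify the sandwich \eqref{eq:assumption-extension} for this $\Phi$. For the upper bound, the weighted AM--GM inequality applied to $b_{i}=\langle A_{i}^{-1}x,x\rangle^{-1}$ gives $\prod_{i}b_{i}^{w_{i}}\le\sum_{i}w_{i}b_{i}$, while for $\|x\|=1$ the Cauchy--Schwarz inequality yields $\langle A_{i}x,x\rangle\langle A_{i}^{-1}x,x\rangle\ge 1$. Combining these,
\[
\Phi(\omega;\mathbb{A};x)\le\sum_{i=1}^{n}w_{i}\langle A_{i}x,x\rangle=\langle\frak{A}(\omega;\mathbb{A})x,x\rangle\le\|\frak{A}(\omega;\mathbb{A})\|.
\]
For the lower bound, AM--GM applied directly to $a_{i}=\langle A_{i}^{-1}x,x\rangle$ gives $\prod_{i}a_{i}^{w_{i}}\le\sum_{i}w_{i}a_{i}=\langle\frak{H}(\omega;\mathbb{A})^{-1}x,x\rangle$, which rearranges to $\Phi(\omega;\mathbb{A};x)\ge\langle\frak{H}(\omega;\mathbb{A})^{-1}x,x\rangle^{-1}$; taking the supremum over unit vectors on the right-hand side produces $\|\frak{H}(\omega;\mathbb{A})\|$, completing \eqref{eq:assumption-extension}.

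With the hypothesis in hand, Theorem~\ref{thm-extension} immediately delivers the equivalence
\[
\sum_{i=1}^{n}w_{i}T_{i}\le 0
\iff
\prod_{i=1}^{n}\langle f(\lambda T_{i}+I)^{-1}x,x\rangle^{-w_{i}}\le 1
\]
for all unit $x$ and all sufficiently small $\lambda\ge 0$. Specializing to $w_{i}=1/n$, inverting the inequality, using $\langle f(\lambda T_{i}+I)^{-1}x,x\rangle=\|f(\lambda T_{i}+I)^{-1/2}x\|^{2}$, and extending from unit to arbitrary $x\in\mathcal{H}$ by homogeneity then yields the stated inequality $\|x\|^{n}\le\prod_{i=1}^{n}\|f(\lambda T_{i}+I)^{-1/2}x\|$.

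The principal obstacle is spotting the correct $\Phi$. The choice $\prod\langle A_{i}x,x\rangle^{w_{i}}$ used for Corollary~\ref{Corollary norm} places $f(\lambda T_{i}+I)^{1/2}$ (rather than $f(\lambda T_{i}+I)^{-1/2}$) into the resulting inequality, and the naive variant $\prod\langle A_{i}^{-1}x,x\rangle^{w_{i}}$ cannot be squeezed between $\|\frak{H}\|$ and $\|\frak{A}\|$; only the reciprocal $\prod\langle A_{i}^{-1}x,x\rangle^{-w_{i}}$ works, and verifying its upper bound requires pairing AM--GM with the Cauchy--Schwarz identity $\langle Ax,x\rangle\langle A^{-1}x,x\rangle\ge 1$ for $\|x\|=1$.
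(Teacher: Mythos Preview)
The paper does not prove Theorem~\ref{thm: Karcher mean non-weighted} itself; it is quoted from \cite{UY2014} and presented as the equal-weight instance of Corollary~\ref{Corollary norm}. The paper's proof of Corollary~\ref{Corollary norm} follows the same overall template as yours---choose a functional $\Phi$ and invoke Theorem~\ref{thm-extension}---but with a different choice: there $\Phi(\omega;\mathbb{A};x)=\prod_i\langle A_ix,x\rangle^{w_i}$, and the lower sandwich bound is obtained via the Karcher-mean inequality $\langle\Lambda(\omega;\mathbb{A})x,x\rangle\le\prod_i\langle A_ix,x\rangle^{w_i}$ from \cite{Y2013}. That $\Phi$ produces condition~(2) of Corollary~\ref{Corollary norm}, namely $\prod_i\|f(\lambda T_i+I)^{1/2}x\|^{w_i}\le 1$, which is formally different from condition~(2) of Theorem~\ref{thm: Karcher mean non-weighted}; the two become equivalent only through their common equivalence with~(1).

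Your choice $\Phi(\omega;\mathbb{A};x)=\prod_i\langle A_i^{-1}x,x\rangle^{-w_i}$ is a genuinely different route: it lands directly on the $f(\lambda T_i+I)^{-1/2}$ form of Theorem~\ref{thm: Karcher mean non-weighted}, and your verification of \eqref{eq:assumption-extension} uses only AM--GM together with the elementary Cauchy--Schwarz bound $\langle Ax,x\rangle\langle A^{-1}x,x\rangle\ge 1$ and the identity $\sup_{\|x\|=1}\langle H^{-1}x,x\rangle^{-1}=\|H\|$, so it avoids the external reference to \cite{Y2013}. Both arguments are correct; yours is more self-contained and matches the stated theorem exactly, while the paper's choice yields a slightly different (though, via~(1), equivalent) weighted inequality.
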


From here we shall consider 
another geometric mean for  $n$-tuples of positive definite operators
which is called the log-Euclidean mean $\frak{G}_{E}(\omega; \mathbb{A})$
for $\mathbb{A}=(A_{1},...,A_{n})\in B(\mathcal{H})_{+}^{n}$ and $\omega=(w_{1},...,w_{n})\in \Delta_{n}$. 
It is defined by
$$ \frak{G}_{E}(\omega; \mathbb{A})=\exp\left(\sum_{i=1}^{n}w_{i}\log A_{i}\right). $$
Log-Euclidean mean satisfies some of properties (P1)--(P10) in Section 2.
However, log-Euclidean mean does not satisfy 
important properties (P4) and (P10). 

\begin{cor}\label{cor1}
Let  $f\in OM_{1}$ be non-constant.
For $\mathbb{A}\in B(\mathcal{H})_{+}^{n}$ and  
$\omega\in \Delta_{n}$, 
let $M(\omega; \mathbb{A})$ be 
ALM or weighted BMP or log-Euclidean mean
(in the ALM mean case, $\omega$ should be $\omega=(\frac{1}{n},...,\frac{1}{n})$).
Then for $\mathbb{T}=(T_{1},...,T_{n})\in B(\mathcal{H})_{sa}^{n}$ and 
$\omega=(w_{1},...,w_{n})\in \Delta_{n}$,
the following assertions are equivalent:
\begin{enumerate}
\item $\displaystyle \sum_{i=1}^{n}w_{i}T_{i}\leq 0$,
\item $\displaystyle 
M(\omega; f(\lambda T_{1}+I),...,f(\lambda T_{n}+I))\leq I$
for all sufficiently small $\lambda\geq 0$.
\end{enumerate}
\end{cor}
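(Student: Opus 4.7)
The plan is to apply Theorem \ref{thm-extension} to each of the three candidate means $M$ by choosing the functional
$$\Phi(\omega; \mathbb{A}; x) = \langle M(\omega; \mathbb{A}) x, x\rangle.$$
Since $M(\omega; \mathbb{A})$ is always positive definite, $\sup_{\|x\|=1} \Phi(\omega; \mathbb{A}; x) = \|M(\omega; \mathbb{A})\|$, and the assertion $\Phi(\omega; \cdot; x) \leq 1$ for all unit vectors $x$ is equivalent to the operator inequality $M(\omega; \cdot) \leq I$. Consequently, condition (2) of Theorem \ref{thm-extension} coincides with condition (2) of the corollary, and all that remains is to verify the sandwich hypothesis \eqref{eq:assumption-extension} for each $M$.

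For the ALM mean and the weighted BMP mean, property (P10) from Section 2 gives the operator order sandwich
$$ \mathfrak{H}(\omega; \mathbb{A}) \leq M(\omega; \mathbb{A}) \leq \mathfrak{A}(\omega; \mathbb{A}), $$
and taking norms of these positive operators yields \eqref{eq:assumption-extension} at once. Theorem \ref{thm-extension} then delivers the claimed equivalence in these two cases.

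For the log-Euclidean mean, property (P10) fails, so we argue directly from the explicit formula $\mathfrak{G}_E(\omega; \mathbb{A}) = \exp(\sum_i w_i \log A_i)$. The key observation is that for any self-adjoint operator $X$, the spectral mapping theorem gives $\exp(X) \leq I$ if and only if $X \leq 0$. For (1)$\Rightarrow$(2), the operator concavity of $f$ together with $f(1)=1$ yields $\sum_i w_i f(\lambda T_i + I) \leq I$, exactly as in the proof of Theorem \ref{thm-extension}; operator monotonicity of $\log$ combined with Jensen's inequality for the operator concave function $\log$ then gives
$$ \sum_i w_i \log f(\lambda T_i + I) \;\leq\; \log\!\Bigl(\sum_i w_i f(\lambda T_i + I)\Bigr) \;\leq\; 0, $$
and exponentiating produces $\mathfrak{G}_E(\omega; f(\lambda T_1 + I),\dots,f(\lambda T_n + I)) \leq I$. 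Conversely, from $\exp(\sum_i w_i \log f(\lambda T_i + I)) \leq I$ we deduce $\sum_i w_i \log f(\lambda T_i + I) \leq 0$; dividing by $\lambda > 0$ and invoking formula \eqref{eq: limit-f(1)} as $\lambda \to 0$ gives $f'(1) \sum_i w_i T_i \leq 0$, and hence $\sum_i w_i T_i \leq 0$, where we may assume $f'(1)>0$ as in the proof of Theorem \ref{thm-extension}.

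The main obstacle is precisely the log-Euclidean case, where (P10) is unavailable and no norm sandwich by $\mathfrak{H}$ and $\mathfrak{A}$ follows from the listed properties. This forces the separate spectral argument above rather than a clean reduction to Theorem \ref{thm-extension}. Once the equivalence $\exp(X) \leq I \Leftrightarrow X \leq 0$ for self-adjoint $X$ is exploited, the remaining work reduces to a routine combination of operator concavity of $f$ and $\log$ with the asymptotic identity \eqref{eq: limit-f(1)}.
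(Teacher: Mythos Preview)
Your treatment of the ALM and BMP cases is identical to the paper's: set $\Phi(\omega;\mathbb{A};x)=\langle M(\omega;\mathbb{A})x,x\rangle$, use (P10) to get the norm sandwich, and invoke Theorem~\ref{thm-extension}.

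For the log-Euclidean case your argument is correct but takes a genuinely different route. You bypass Theorem~\ref{thm-extension} altogether and argue directly from the spectral equivalence $\exp(X)\leq I\Leftrightarrow X\leq 0$, combined with the limit $\tfrac{1}{\lambda}\log f(\lambda T_i+I)\to f'(1)T_i$ (which is just the logarithm of \eqref{eq: limit-f(1)}). The paper, by contrast, \emph{does} establish a norm sandwich for $\mathfrak{G}_E$ and then feeds it into Theorem~\ref{thm-extension}: it first proves the chaotic-order inequality
\[
\log\mathfrak{H}(\omega;\mathbb{A})\;\leq\;\log\mathfrak{G}_E(\omega;\mathbb{A})\;\leq\;\log\mathfrak{A}(\omega;\mathbb{A})
\]
via operator concavity of $\log$, and then invokes a result of Furuta \cite{F1997} (that $\log A\leq\log B$ implies $\|A\|\leq\|B\|$) to pass to norms. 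So your claim that ``no norm sandwich by $\mathfrak{H}$ and $\mathfrak{A}$ follows'' is too pessimistic---one does follow, just not from (P10). What your approach buys is self-containment: you avoid the external citation \cite{F1997} and the detour through chaotic order. What the paper's approach buys is uniformity: all three means are handled by a single appeal to Theorem~\ref{thm-extension}, making the corollary a pure corollary rather than requiring a separate ad hoc argument for one case.
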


\begin{proof}
The cases of ALM and BMP means.
Put $\Phi(\omega; \mathbb{A};x)=\langle M(\omega; \mathbb{A})x,x\rangle$.
Then by (P10), $\Phi(\omega; \mathbb{A};x)$ 
satisfies the condition \eqref{eq:assumption-extension}. 
So that we can prove the cases of ALM and BMP means by Theorem \ref{thm-extension}.

By the way, log-Euclidean mean satisfies
\begin{equation}
 \log \frak{H}(\omega; \mathbb{A}) \leq 
\log \frak{G}_{E}(\omega; \mathbb{A}) \leq 
\log \frak{A}(\omega; \mathbb{A})
\label{log-inequality log-Euclidean mean}
\end{equation}
for $\mathbb{A}\in B(\mathcal{H})_{+}^{n}$ and $\omega\in \Delta_{n}$.
In fact, by the operator concavity of $\log t$, we have
\begin{align*}
\log \frak{G}_{E}(\omega; \mathbb{A}) & = 
\log \left[ \exp\left(\sum_{i=1}^{n}w_{i}\log A_{i}\right) \right]\\
& =
\sum_{i=1}^{n}w_{i}\log A_{i} \\
& \leq 
\log \left(\sum_{i=1}^{n}w_{i}A_{i}\right) =\log \frak{A}(\omega; \mathbb{A}).
\end{align*}

On the other hand, we have 
\begin{align*}
\log \frak{H}(\omega; \mathbb{A}) & = 
\log \frak{A}(\omega; \mathbb{A}^{-1})^{-1} \\
& = 
- \log \frak{A}(\omega; \mathbb{A}^{-1}) \\
& \leq
- \log \frak{G}_{E}(\omega; \mathbb{A}^{-1}) \\
& =
\log \frak{G}_{E}(\omega; \mathbb{A}).
\end{align*}
Hence we have \eqref{log-inequality log-Euclidean mean}.
We remark that if $\log A\leq \log B$ for $A,B\in B(\mathcal{H})_{+}$, 
then for each $p>0$, there is a unitary operator $U_{p}$
such that $A^{p}\leq U_{p}^{*}B^{p}U_{p}$ in \cite{F1997}. Hence we have 
$\|A\|\leq \|B\|$. By using this fact to \eqref{log-inequality log-Euclidean mean}, 
we have
$$  \| \frak{H}(\omega; \mathbb{A})\| \leq 
\| \frak{G}_{E}(\omega; \mathbb{A}) \| \leq 
\| \frak{A}(\omega; \mathbb{A})\|. 
$$
Hence we can prove Corollary \ref{cor1} by putting 
$\Phi(\omega; \mathbb{A};x)=\langle \frak{G}_{E}(\omega; \mathbb{A})x,x\rangle$ in 
 Theorem \ref{thm-extension}.
\end{proof}

\section{Logarithmic means}

We shall consider some logarithmic means for $n$-tuples of positive definite operators.
Since the representing function of logarithmic mean is $\frac{t-1}{\log t}$,
logarithmic mean $A\lambda B$ of $A, B\in B(\mathcal{H})_{+}$ 
can be considered as 
$$ A\lambda B =\int_{0}^{1} A\sharp_{t} B dt. $$
So it is quite natural to consider the similar 
type of integrated means as follows.

\begin{defi}[$M$-logarithmic mean]
Let $M:\ \Delta_{n}\times B(\mathcal{H})_{+}^{n}\to B(\mathcal{H})_{+}$.
Then for $\mathbb{A}\in B(\mathcal{H})_{+}^{n}$, 
the $M$-logarithmic mean $\frak{L}(M)(\mathbb{A})$ of 
$\mathbb{A}\in B(\mathcal{H})_{+}^{n}$ is defined by 
$$  \frak{L}(M)(\mathbb{A}):=\int_{\Delta_{n}}
M(\omega; \mathbb{A}) dp(\omega) $$
if there exists, where $dp(\omega)$ means an arbitrary probability measure 
on $\Delta_{n}$.
\end{defi}

In what follows, we consider the case of $dp(\omega)=(n-1)!d\omega$.

\begin{prop}\label{prop:logarithmic mean}
Let $M: \Delta_{n}\times B(\mathcal{H})_{+}^{n}\to B(\mathcal{H})_{+}$ satisfying 
(P3), (P7), (P8) and (P10). Then $M$-logarithmic mean
$$ \frak{L}(M)(\mathbb{A})= (n-1)! \int_{\Delta_{n}} M(\omega; \mathbb{A})d \omega $$
satisfies (P3) and  (P7) if it exists.
Especially, $\frak{L}(M)$ satisfies (P10), i.e.,  
$$ \frak{H}(\mathbb{A})\leq 
\frak{L}(M)(\mathbb{A})\leq \frak{A}(\mathbb{A}).$$
\end{prop}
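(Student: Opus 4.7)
The plan is to reduce each of (P3), (P7), and (P10) for $\frak{L}(M)$ to the corresponding property of $M$ combined with a short analytic fact about integration over the simplex $\Delta_n$. The symmetry property (P3) follows by writing $\frak{L}(M)$ of a permuted tuple $(A_{\sigma(1)},\dots,A_{\sigma(n)})$, using (P3) for $M$ to shift the permutation from the operators onto the weights, and then performing the induced change of variables; since the $(n-1)$-dimensional Lebesgue measure on $\Delta_n$ is invariant under coordinate permutations, the integral is unchanged. The congruence invariance (P7) is even more direct: applying (P7) for $M$ pointwise in $\omega$, one pulls $X^{*}$ and $X$ outside the integral by linearity.

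For the upper bound in (P10), I would apply (P10) for $M$ pointwise to get $M(\omega;\mathbb{A})\le\sum_{i}w_{i}A_{i}$, integrate, and interchange the finite sum with the integral to obtain
$$\frak{L}(M)(\mathbb{A})\le\sum_{i}A_{i}\cdot(n-1)!\int_{\Delta_n}w_{i}\,d\omega.$$
By permutation symmetry of $\Delta_n$ the number $\int_{\Delta_n}w_{i}\,d\omega$ is independent of $i$, and since $\sum_{i}w_{i}=1$ it equals $\frac{1}{n}$ times the total mass $\int_{\Delta_n}d\omega=\frac{1}{(n-1)!}$. Thus $(n-1)!\int_{\Delta_n}w_{i}\,d\omega=\frac{1}{n}$, and the upper bound reduces to $\frac{1}{n}\sum_{i}A_{i}=\frak{A}(\mathbb{A})$.

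For the lower bound, (P10) for $M$ (equivalently, the upper bound in (P10) applied to $\mathbb{A}^{-1}$ combined with (P8)) gives $M(\omega;\mathbb{A})\ge\bigl(\sum_{i}w_{i}A_{i}^{-1}\bigr)^{-1}$ pointwise in $\omega$. Regarding $d\mu(\omega)=(n-1)!\,d\omega$ as a probability measure on $\Delta_n$, I would then invoke Jensen's inequality for the operator convex function $t\mapsto t^{-1}$:
$$\int_{\Delta_n}\Bigl(\sum_{i}w_{i}A_{i}^{-1}\Bigr)^{-1}d\mu(\omega)\ge\Bigl(\int_{\Delta_n}\sum_{i}w_{i}A_{i}^{-1}\,d\mu(\omega)\Bigr)^{-1}.$$
The right-hand side equals $\bigl(\frac{1}{n}\sum_{i}A_{i}^{-1}\bigr)^{-1}=\frak{H}(\mathbb{A})$ by the same simplex computation, and chaining the inequalities gives $\frak{L}(M)(\mathbb{A})\ge\frak{H}(\mathbb{A})$.

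The two nontrivial analytic inputs are the simplex integral $(n-1)!\int_{\Delta_n}w_{i}\,d\omega=\frac{1}{n}$ and the operator Jensen inequality for $t\mapsto t^{-1}$. I expect the main conceptual hurdle to be the lower bound: linearity of the integral alone only converts the pointwise lower bound into an average of weighted harmonic means, and one genuinely needs operator convexity of inversion to descend from this average to the unweighted harmonic mean $\frak{H}(\mathbb{A})$.
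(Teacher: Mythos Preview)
Your argument is correct and, for (P3), (P7), and the upper half of (P10), coincides with the paper's (the paper simply records $\frak{L}(\frak{A})=\frak{A}$, which is exactly your simplex computation $(n-1)!\int_{\Delta_n}w_i\,d\omega=\tfrac{1}{n}$).

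For the lower bound the paper is organized a little differently. Instead of integrating the pointwise harmonic lower bound and then applying operator Jensen to the integrand $\omega\mapsto\sum_i w_i A_i^{-1}$, the paper first applies the already-proved upper bound to $\mathbb{A}^{-1}$ and inverts, obtaining $\frak{H}(\mathbb{A})\le\{\frak{L}(M)(\mathbb{A}^{-1})\}^{-1}$; it then uses (P8) to rewrite this as $\bigl((n-1)!\int_{\Delta_n}M(\omega;\mathbb{A})^{-1}\,d\omega\bigr)^{-1}$ and finishes with the same operator Jensen inequality for $t\mapsto t^{-1}$, now applied to $\omega\mapsto M(\omega;\mathbb{A})$. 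The key analytic input---operator convexity of inversion over a probability measure---is identical in both proofs; your route has the small bonus that it never actually invokes hypothesis (P8), while the paper's dualization makes the symmetry between the two bounds more visible.
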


We remark that $\frak{L}(\frak{A})(\mathbb{A})=\frak{A}(\mathbb{A})$.
As for the preparation, we define some notations.
Let $S$ be the cyclic shift operator on $\mathbb{C}^{n}$ and let $\mathbb{S}$
be also the cyclic shift operator on $B(\mathcal{H})^{n}$; namely,
\begin{align*}
S(w_{1},w_{2},...,w_{n}) & = (w_{2},w_{3},...,w_{n}, w_{1}). \\
\mathbb{S}(A_{1},A_{2},...,A_{n}) & = (A_{2},A_{3},...,A_{n}, A_{1}).
\end{align*}
We claim that if $M$ satisfies (P3), then 
$M(S\omega; \mathbb{A})=M(\omega; \mathbb{S}^{*}\mathbb{A})$.

\begin{proof}[Proof of Proposition \ref{prop:logarithmic mean}.] 
It is clear that $\frak{L}(M)$ satisfies (P3) and (P7). 
The remain is to show (P10).
Let $\mathbb{M}$ be the set of all maps  
$M: \Delta_{n}\times B(\mathcal{H})_{+}^{n}\to B(\mathcal{H})_{+}$. 
It is easy to show that $\frak{L}$ is a linear map 
on $\mathbb{M}$, and 
$\frak{L}(M)(\mathbb{A})\geq 0$ for all $\mathbb{A}\in B(\mathcal{H})^{n}_{+}$ if
$M\in \mathbb{M}$. Hence for  $N_{1}, M, N_{2}\in \mathbb{M}$,
if $N_{1}(\omega; \mathbb{A})\leq M(\omega; \mathbb{A}) \leq 
N_{2}(\omega; \mathbb{A})$ holds for all $\omega\in \Delta_{n}$ and
$\mathbb{A}\in B(\mathcal{H})_{+}^{n}$, then 
$$ \frak{L}(N_{1})(\mathbb{A}) \leq \frak{L}(M)(\mathbb{A}) \leq \frak{L}(N_{2})(\mathbb{A})$$
holds for all $\mathbb{A}\in B(\mathcal{H})^{n}_{+}$.
Since $M(\omega;\mathbb{A})$ satisfies (P10), we have
\begin{align*}
\frak{L}(M)(\mathbb{A}) & =
(n-1)!\int_{\Delta_{n}}M(\omega; \mathbb{A})d \omega \\
& \leq
(n-1)!\int_{\Delta_{n}}\frak{A}(\omega; \mathbb{A})d \omega \\
& =
\frak{L}(\frak{A})(\mathbb{A})=
\frak{A}(\mathbb{A}).
\end{align*}
On the other hand, we have 
\begin{align*}
\frak{H}(\mathbb{A}) 
& = 
\frak{A}(\mathbb{A}^{-1})^{-1} \\
& \leq 
\{\frak{L}(M)(\mathbb{A}^{-1})\}^{-1} \\
& =
\left( (n-1)!\int_{\Delta_{n}} M(\omega; \mathbb{A}^{-1})d \omega\right)^{-1} \\
& =
\left( (n-1) \int_{\Delta_{n}} M(\omega; \mathbb{A})^{-1}d \omega \right)^{-1} 
\quad\mbox{ by (P8)} \\
& \leq 
(n-1)! \int_{\Delta_{n}} M(\omega; \mathbb{A})d \omega 
 =
\frak{L}(M)(\mathbb{A}).
\end{align*}

\end{proof}

\begin{rem}
The above theorem is valid for an arbitrary permutation (shift)-
invariant probability measure $p$.
\end{rem}

\begin{rem}
Let $M:\ \Delta_{n}\times B(\mathcal{H})_{+}^{n}\to B(\mathcal{H})_{+}$ be a map satisfying (P3), (P7), (P8) and (P10).
We put 
$$ M_{0}(\omega; \mathbb{A}):=
M\left( (\frac{1}{n},...,\frac{1}{n}); 
M(\omega; \mathbb{A}), M(S\omega; \mathbb{A}),...,
M(S^{n-1}\omega; \mathbb{A})\right).$$
Then $M_{0}$ satisfies the assumption of Proposition \ref{prop:logarithmic mean}.
So $\frak{L}(M_{0})$ also satisfies (P10).
Moreover, the following inequalities hold 
$$ \frak{H}(\mathbb{A})\leq \frak{L}(M_{0})(\mathbb{A})\leq 
\frak{L}(M)(\mathbb{A})\leq \frak{A}(\mathbb{A}). $$
The second inequality can be shown as follows.
Since $M(\omega; \mathbb{A})$ satisfies (P10), we have
$$M_{0}(\omega; \mathbb{A})\leq \sum_{k=0}^{n-1}\frac{1}{n}M(S^{k}\omega;\mathbb{A}).$$
Then we obtain
\begin{align*}
\frak{L}(M_{0})(\mathbb{A}) & = 
(n-1)!\int_{\Delta_{n}} M_{0}(\omega;\mathbb{A})d\omega \\
& \leq 
(n-1)!\int_{\Delta_{n}} \left\{ \sum_{k=0}^{n-1}\frac{1}{n}M(S^{k}\omega;\mathbb{A})\right\} d\omega \\
& = 
\frac{(n-1)!}{n}  \sum_{k=0}^{n-1}\int_{\Delta_{n}} M(S^{k}\omega;\mathbb{A}) d\omega \\
& = 
\frac{1}{n}  \sum_{k=0}^{n-1}\frak{L}(M)(\mathbb{A})
 = \frak{L}(M)(\mathbb{A}).
\end{align*}
%
\end{rem}

Since the weighted Karcher mean $\Lambda(\omega;\mathbb{A})$ 
is continuous on the probability vector 
in the Thompson metric \cite{LL2014}, so $\frak{L}(\Lambda)(\mathbb{A})$
exists.

\begin{prop}\label{prop4}
$$ \frak{H}(\mathbb{A})\leq \frak{L}(\Lambda)(\mathbb{A})\leq 
\frak{A}(\mathbb{A}). $$
\end{prop}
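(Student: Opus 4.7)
The plan is to apply Proposition \ref{prop:logarithmic mean} directly to $M=\Lambda$, the weighted Karcher mean. To do so, I first need to verify that the hypotheses of Proposition \ref{prop:logarithmic mean} are met by $\Lambda$. As noted in Section 4, the Karcher mean is known to satisfy all ten basic properties (P1)--(P10) of \cite{BH2006, LL2010, LL2014}, so in particular it satisfies (P3), (P7), (P8), and (P10). Thus the only real content to check is that the integral $\frak{L}(\Lambda)(\mathbb{A}) = (n-1)!\int_{\Delta_{n}} \Lambda(\omega; \mathbb{A})\,d\omega$ exists as a Bochner integral.

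For existence, I would use the fact cited just before the statement that $\omega \mapsto \Lambda(\omega; \mathbb{A})$ is continuous on the probability simplex $\Delta_n$ with respect to the Thompson metric \cite{LL2014}. Since $\Delta_n$ is compact and Thompson-continuity implies norm-continuity on bounded sets (the Thompson metric is equivalent to the operator norm on sets that are uniformly bounded above and uniformly bounded below away from zero, which is guaranteed here because $\Lambda(\omega;\mathbb{A})$ lies between $\frak{H}(\omega;\mathbb{A})$ and $\frak{A}(\omega;\mathbb{A})$ by (P10), and both of these are continuous in $\omega$), the map $\omega \mapsto \Lambda(\omega; \mathbb{A})$ is bounded and strongly measurable, hence Bochner integrable on $\Delta_n$.

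Once existence is established, the conclusion $\frak{H}(\mathbb{A})\leq \frak{L}(\Lambda)(\mathbb{A})\leq \frak{A}(\mathbb{A})$ is precisely the (P10) conclusion of Proposition \ref{prop:logarithmic mean} applied with $M=\Lambda$. No further argument is needed beyond citing that proposition.

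The main (and really the only) obstacle is the integrability argument, and even it is essentially routine given the continuity result for $\Lambda$ in the Thompson metric. The proof will therefore be short: one sentence verifying the hypotheses, one sentence invoking the continuity-plus-compactness to guarantee existence of $\frak{L}(\Lambda)(\mathbb{A})$, and one sentence applying Proposition \ref{prop:logarithmic mean}.
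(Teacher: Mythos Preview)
Your proposal is correct and follows essentially the same approach as the paper: the paper's proof simply notes that the weighted Karcher mean satisfies (P1)--(P10) and then invokes Proposition \ref{prop:logarithmic mean}, with the existence of $\frak{L}(\Lambda)(\mathbb{A})$ handled by the continuity remark immediately preceding the statement. Your additional care about Bochner integrability is a bit more detailed than the paper, but the argument is the same.
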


\begin{proof}
Since the weighted Karcher mean satisfies 
(P1)--(P10) in Section 2 \cite{BH2006, LL2010, LL2014}, 
it is easy by Proposition \ref{prop:logarithmic mean}.
\end{proof}

\begin{cor}\label{cor5}
Logarithmic mean $\frak{L}(\Lambda)(\mathbb{A})$ satisfies the same assertion to 
Corollary \ref{cor1}, too.
\end{cor}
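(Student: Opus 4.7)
The plan is to derive Corollary~\ref{cor5} as a direct application of Theorem~\ref{thm-extension} at the uniform-weight probability vector $\omega_{0}=(\frac{1}{n},\dots,\frac{1}{n})$. Since $\frak{L}(\Lambda)(\mathbb{A})$ carries no weight argument (the weight has been integrated out against $(n-1)!\,d\omega$), the assertion to prove is the equivalence of $\sum_{i=1}^{n}T_{i}\leq 0$ with $\frak{L}(\Lambda)(f(\lambda T_{1}+I),\dots,f(\lambda T_{n}+I))\leq I$ for all sufficiently small $\lambda\geq 0$.

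First I would supply a functional $\Phi$ on $\Delta_{n}\times B(\mathcal{H})_{+}^{n}\times \mathcal{H}$ that fits the hypothesis~\eqref{eq:assumption-extension} of Theorem~\ref{thm-extension} on the whole of $\Delta_{n}$. A natural choice is
$$
\Phi(\omega;\mathbb{A};x):=\begin{cases}\langle \frak{L}(\Lambda)(\mathbb{A})x,x\rangle & \text{if }\omega=\omega_{0},\\ \langle \Lambda(\omega;\mathbb{A})x,x\rangle & \text{otherwise},\end{cases}
$$
so that on $\omega\neq\omega_{0}$ the required sandwiching follows from property~(P10) for the Karcher mean, and on $\omega_{0}$ it reduces to
$$
\|\frak{H}(\omega_{0};\mathbb{A})\|\leq \|\frak{L}(\Lambda)(\mathbb{A})\|\leq \|\frak{A}(\omega_{0};\mathbb{A})\|.
$$

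The key verification is therefore the $\omega_{0}$-case, and this is essentially already done: Proposition~\ref{prop4} delivers the Löwner-order sandwich $\frak{H}(\mathbb{A})\leq\frak{L}(\Lambda)(\mathbb{A})\leq\frak{A}(\mathbb{A})$ (with the implicit uniform weight on both ends, matching $\omega_{0}$), and the operator norm is monotone with respect to the Löwner order on positive operators, which upgrades the order inequality to the norm inequality. Note also that $\sup_{\|x\|=1}\langle \frak{L}(\Lambda)(\mathbb{A})x,x\rangle=\|\frak{L}(\Lambda)(\mathbb{A})\|$ because $\frak{L}(\Lambda)(\mathbb{A})$ is positive.

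With \eqref{eq:assumption-extension} in hand, Theorem~\ref{thm-extension} applied at $\omega=\omega_{0}$ gives the desired equivalence in one stroke. I do not expect any substantive obstacle beyond Proposition~\ref{prop4}; the analytic content (the limits~\eqref{eq: limit-f(1)} and~\eqref{eq:limit-exp}, the operator concavity of $t^{\alpha}$, etc.) has already been absorbed into Theorem~\ref{thm-extension}, so this corollary is genuinely a packaging step whose only input is the sandwiching property of $\frak{L}(\Lambda)$.
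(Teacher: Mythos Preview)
Your proof is correct and follows essentially the same route as the paper: apply Theorem~\ref{thm-extension} with $\Phi$ built from $\langle \frak{L}(\Lambda)(\mathbb{A})x,x\rangle$, using Proposition~\ref{prop4} to supply the sandwich~\eqref{eq:assumption-extension} at the uniform weight---exactly what ``the same way to the proof of Corollary~\ref{cor1}'' means. Your piecewise definition of $\Phi$ (filling in the Karcher mean at $\omega\neq\omega_{0}$) is a harmless technical device to meet the ``for all $\omega\in\Delta_{n}$'' hypothesis of Theorem~\ref{thm-extension} as stated; since the proof of that theorem only ever uses the single chosen $\omega$, this care is not strictly necessary, but it is correct.
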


\begin{proof}
We can prove Corollary \ref{cor5} by the same way to the proof of 
Corollary \ref{cor1}.
\end{proof}

\end{document}